\theoremstyle{plain}	
\newtheorem{theorem}{Theorem}[section]
\newtheorem{proposition}[theorem]{Proposition}
\newtheorem{lemma}[theorem]{Lemma}
\newtheorem{conjecture}[theorem]{Conjecture}
\newtheorem{corollary}[theorem]{Corollary}
\theoremstyle{definition}
\newtheorem{example}[theorem]{Example}
\newcommand{\DS}{\displaystyle}
\newcommand{\SC}{\scriptstyle}
\newcommand{\SSC}{\scriptscriptstyle}
\DeclareMathOperator{\Frob}{Frob}
\DeclareMathOperator{\Gal}{Gal}
\DeclareMathOperator{\GL}{GL}
\DeclareMathOperator{\Hom}{Hom}
\DeclareMathOperator{\Jac}{Jac}
\DeclareMathOperator{\ord}{ord}
\renewcommand{\Re}{\mathop{\rm Re}}
\DeclareMathOperator{\rk}{rank}
\DeclareMathOperator{\Sym}{Sym}
\DeclareMathOperator{\Tr}{tr}
\newcommand{\ra}{\rightarrow}
\newcommand{\lra}{\longrightarrow}
\newcommand{\et}{{\rm \acute{e}t}}
\renewcommand{\ss}{{\rm ss}}
\renewcommand{\phi}{\varphi}
\newcommand{\es}{\enspace}
\newcommand{\ilim}[1][]{\ifthenelse{\equal{#1}{}}% falls Argument leer
{\DS \lim_{\longleftarrow}}%                         verwende niedrige Version
{\DS \lim_{\underset{#1}{\longleftarrow}}}%  sonst:  verwende Argument
}
\newcommand{\dlim}[1][]{\ifthenelse{\equal{#1}{}}% falls Argument leer
{\DS \lim_{\longrightarrow}}%                        verwende niedrige Version
{\DS \lim_{\underset{#1}{\longrightarrow}}}% sonst:  verwende Argument
}
\edef\csname rm\@Alph\@tempcnta\endcsname{\noexpand\mathrm{\@Alph\@tempcnta}}
\edef\csname s\@Alph\@tempcnta\endcsname{\noexpand\mathscr{\@Alph\@tempcnta}}
\edef\csname b\@Alph\@tempcnta\endcsname{\noexpand\mathbb{\@Alph\@tempcnta}}
\edef\csname c\@Alph\@tempcnta\endcsname{\noexpand\mathcal{\@Alph\@tempcnta}}
\edef\csname rm\@alph\@tempcnta\endcsname{\noexpand\mathrm{\@alph\@tempcnta}}
\edef\csname fr\@alph\@tempcnta\endcsname{\noexpand\mathfrak{\@alph\@tempcnta}}
\edef\csname fr\@Alph\@tempcnta\endcsname{\noexpand\mathfrak{\@Alph\@tempcnta}}
\edef\csname bs\@Alph\@tempcnta\endcsname{\noexpand\boldsymbol{\@Alph\@tempcnta}}
\edef\csname bs\@alph\@tempcnta\endcsname{\noexpand\boldsymbol{\@alph\@tempcnta}}
\edef\csname bf\@Alph\@tempcnta\endcsname{\noexpand\mathbf{\@Alph\@tempcnta}}
\edef\csname bf\@alph\@tempcnta\endcsname{\noexpand\mathbf{\@alph\@tempcnta}}
\begin{document}

%%%%%%%%%%%%%%%%%%%%%%%%%%%%%%%%%%%%%%%%%%%%%%%%%%%%%%%%%%%%%%%%%%%%%%%%%%%%%%%%%%%%%%%%%
%% 基本情報

\author{Yoshiaki Okumura}
\title{Chebyshev's bias for Fermat curves of prime degree}
\date{}

\maketitle
\thispagestyle{empty}

%%%%%%%%%%%%  Abstract %%%%%%%%%%%%%%%%%%%%%%%%%%%%%%%%%%%%%%%%%%%%%%%%%%%%%%%%%
\begin{abstract}
In this article, we prove that an asymptotic formula for the prime number race with respect to Fermat curves of prime degree is equivalent to part of the Deep Riemann Hypothesis (DRH), which is a conjecture on the convergence of partial Euler products of $L$-functions on the critical line. 
We also show that such an equivalence holds for some quotients of Fermat curves.  
As an application, we compute the order of zero at $s=1$ for the second moment $L$-functions of those curves under DRH.
\end{abstract}

%%%%%%%%%%%%  脚注, 目次, 2010MSC, Keyword %%%%%%%%%%%%%%%%%%%%%%%%%%%%%%%%%%%

\pagestyle{myheadings}
\markboth{Y.\ Okumura}{Chebyshev's bias for Fermat curves of prime degree}

\renewcommand{\thefootnote}{\fnsymbol{footnote}}
\footnote[0]{2020 Mathematics Subject Classification:\ Primary 11M06, 11N05;\ Secondary 11G30.}
\renewcommand{\thefootnote}{\arabic{footnote}}
\renewcommand{\thefootnote}{\fnsymbol{footnote}}
\footnote[0]{Keywords:\ Chebyshev's bias, $L$-function, Fermat curve, Jacobi sum, the Deep Riemann Hypothesis.}
\renewcommand{\thefootnote}{\arabic{footnote}}

%\setcounter{tocdepth}{1}
%\tableofcontents  %目次

%2020 MSC

%11F80  	Galois representations
%11R58  	Arithmetic theory of algebraic function fields 
%11G09  	Drinfel'd modules; higher-dimensional motives
%11G30     Curves of arbitrary genus or genus \neq 1 over global fields
%11G40    L-functions of varieties over global fields; BSD conjecture
%11M06    ζ(s) and L(s)
%11M23    Nonreal zeros of ζ(s) and L; Riemann and other hypothesis
%11N05    Distribution of primes
%13A35  	Characteristic $p$ methods (Frobenius endomorphism) and reduction to characteristic $p$

%%%%%%%%%%%%%%%%%%%%%%%%%%%%%%%%%%%%%%%%%%%%%%%%%%%%%%%%%%%%%%%%%%%%%%%%%%%%%%%%
%%%%%      	以下、本文
%%%%%%%%%%%%%%%%%%%%%%%%%%%%%%%%%%%%%%%%%%%%%%%%%%%%%%%%%%%%%%%%%%%

%%%%%%%%%%%%%%%%%%%%%%%%%%%%%%%%%%%%%%%%%%%%%%%%%
%
%  1 Introduction
%
%
\section{Introduction}
Chebyshev's bias is the phenomenon that it seems to be more rational primes congruent to $3$ module $4$ than those congruent to $1$ modulo $4$, which was noticed by Chebyshev in a letter to Fuss in 1853. 
More precisely, denote by $\pi(x; n, a)$ the number of rational primes $p$ with $p\leq x$ and $p\equiv a \pmod n$. 
Then it is known for more than $97\%$ of $x <10^{11}$ that the inequality 
\[
\pi(x; 4,3)\geq \pi(x; 4,1)
\]
 holds. 
On the other hand, it is expected that the number of rational primes of the form $4n+1$ and $4n+3$ should be asymptotically equal by Dirichlet's theorem on arithmetic progressions.
Thus Chebyshev's bias can be interpreted as indicating that the rational primes of the form $4n+3$ appear earlier than those of the form $4n+1$.
One of the famous classical results is that the sign of 
$\pi(x; 4, 3)-\pi(x; 4,1)$ changes infinitely often, which is the work of Littlewood \cite{Lit14}.
In contrast, Knapowski and Tur\'an \cite{KT62} conjectured that the density of the numbers $x$ for which $\pi(x; 4, 3)\geq \pi(x; 4,1)$ holds is $1$; however, under the assumption of the Generalized Riemann Hypothesis (GRH), Kaczorowski \cite{Kac95} proved that this conjecture is false.  
In \cite{RS94}, Rubinstein and Sarnak gave a framework to study Chebyshev's bias in prime number races; also see \cite{ANS14, Dev20}.

In recent years, it has become clear that the problems of Chebyshev's type are related to the Deep Riemann Hypothesis (DRH), announced by Kurokawa in his Japanese books \cite{Kur12, Kur13}.
Roughly speaking, the Deep Riemann Hypothesis states that the normalized partial Euler product of an $L$-function converges on the critical line $\Re(s)=1/2$; see \S 2 for more details. 
To analyze Chebyshev's bias, Aoki and Koyama \cite{AK23} considered  
the weighted counting function 
\renewcommand{\arraystretch}{0.4}
\[
\pi_s(x; n, a)=
\sum_{
\begin{array}{c}
\SC p\leq x\\
\SC p\equiv a\ ({\rm mod}\ n)
\end{array}
}\frac{1}{p^s},
\] 
where $s\geq 0$.
Clearly, it coincides with $\pi(x; n, a)$ if $s=0$.
Since the smaller rational prime $p$ permits a higher contribution to $\pi_s(x; n, a)$ as long as we fix $s>0$, the function $\pi_s(x; n, a)$ is useful in the study of prime number races.
Aoki and Koyama proved in \cite{AK23} that when $s=1/2$, part of DRH is equivalent to the  asymptotic formula
\begin{equation}\label{eqmod4}
\pi_{\frac{1}{2}}(x; 4, 3)-\pi_{\frac{1}{2}}(x; 4, 1)=\frac{1}{2}\log\log x+c+o(1)\es\es\es(x\to \infty)
\end{equation}
for some constant $c$ (see Example \ref{exAK}).

%The Chebysev biases for primes of global fields $K$ are formulated as follows. 
%(Note that it differs from those in \cite{RS94, ANS14, Dev20}). 
%
%\begin{defi}[{\cite[Definition 1.1]{AK23}}]
%Let $c_\frp\in \bR$ be a sequence over primes $\frp$ of a global field $K$ such that
%\[
%\lim_{x\to \infty} \frac{\#\{\frp \mid c_\frp>0,\ q_\frp \leq x\}}{ \#\{\frp \mid c_\frp<0,\ q_\frp \leq x  \}}
%=1.
%\] 
%Then we say that $c_\frp$ has a {\it Chebyshev bias towards being positive} (resp.\ {\it being negative}) if there exists a constant $C>0$  (resp.\ $C<0$) such that 
%\[
%\sum_{q_\frp \leq x} \frac{c_\frp}{\sqrt{q_\frp}} \sim C\log\log x\es\es\es(x\to \infty),
%\]
%where $\frp$ runs through all primes of $K$ and $q_\frp$ is the cardinality of the residue field at $\frp$.
%On the other hand, we say that $c_\frp$ is {\it unbiased} if
%\[
%\sum_{q_\frp \leq x} \frac{c_\frp}{\sqrt{q_\frp}} =O(1)\es\es\es(x\to \infty).
%\]
%\end{defi}

In addition, various types of prime number races have been studied in terms of corresponding $L$-functions.
For example, let $E$ be an elliptic curve over $\bQ$ and $a_p(E)=p+1-\#E(\bF_p)$ for each rational prime $p$ at which $E$ has good reduction.
This sequence $a_p(E)$ can be suitably extended to all rational primes. 
Mazur \cite{Maz08} considered the race between the primes with $a_p(E)>0$ and the primes with $a_p(E)<0$ by plotting the graph of  the function 
\[
D_E(x)=\#\{p\leq x\mid a_p(E)>0\}-\#\{p\leq x \mid a_p(E)<0\}
\]
for various $E$. 
Seeing this, he observed that the value of $D_E(x)$ has a bias toward being negative if the algebraic rank of $E$ is large.
He also observed biases on the Fourier coefficients of a modular form.
After that, in a letter to Mazur, Sarnak \cite{Sar07} explained that under GRH and the Linear Independence conjecture (LI), the bias on $D_E(x)$ is related to the zero of the symmetric power of the Hasse-Weil $L$-function of $E$.
He also introduced the relation with the function $\sum_{p\leq x}a_p(E)/\sqrt{p}$.
In \cite{Fio14}, Fiorilli got a sufficient condition on the existence of a bias on $\sum_{p\leq x}a_p(E)/\sqrt{p}$ under the assumption of a weak version of GRH and LI.

In the recent work of Kaneko and Koyama \cite{KK23} (also see  Example \ref{exKK}), the prime number race on elliptic curves is observed in terms of DRH.
From now on, denote by $\ord_{s=a}F(s)$ the order of zero at $s=a$ for a function $F(s)$.
For a global field $K$ and an elliptic curve $E$ over $K$, one can define the number $a_\frp(E)$ for each prime $\frp$ of $K$ similar to $a_p(E)$.
By \cite[Theorem 3.2]{KK23}, part of DRH for the normalized Hasse-Weil $L$-function $L(s, E)$ of $E$ is equivalent to the existence of a constant $c$ such that 
\begin{equation}\label{eqKK231}
\sum_{q_\frp \leq x}\frac{a_\frp(E)}{q_\frp}=-\left(\frac{\delta(E)}{2}+m\right)\log\log x+c+o(1)\es\es\es (x\to \infty),
\end{equation}
where $q_\frp=\#\bF_\frp$ for the residue field $\bF_\frp$ at $\frp$, 
$\delta(E)=-\ord_{s=1}L(s, E)^{\SSC (2)}$ for the second moment $L$-function $L(s, E)^{\SSC (2)}$, and $m=\ord_{s=1/2}L(s, E)$. 
Moreover, by using the fact that  DRH for automorphic $L$-functions coming from non-trivial cuspidal automorphic representations 
holds when $\mathrm{char}(K)>0$ (cf.\ \cite[Theorem 5.5]{KKK23}), Kaneko and Koyama also proved in \cite[Theorem 1.3]{KK23} that the asymptotic 
\begin{equation}\label{eqKK232}
\sum_{q_\frp \leq x}\frac{a_\frp(E)}{q_\frp}=\left(\frac{1}{2}-\rk(E)\right)\log\log x+O(1)\es\es\es (x\to \infty)
\end{equation}
 holds if $\mathrm{char}(K)>0$ and $E$ is non-constant.

In this article, as a first step to studying the higher genus case, we shall consider a bias for Fermat curves of prime degree.
For a fixed rational prime $\ell$,   
let $\cC$ be the Fermat curve over $\bQ$ whose affine equation is given by $x_0^\ell+y^\ell_0=1$.
Then $\cC$ is smooth projective  of genus $g=(\ell-1)(\ell-2)/2$ and has good reduction away from $\ell$.
For each rational prime $p$, we set 
\[
a_p(\cC)=p+1-\#\cC(\bF_p).
%\begin{cases}
%p+1-\#\cC(\bF_p) & (p\neq \ell),\\
%0 & (p=\ell).
%\end{cases}
\]
%where $\cC(\bF_p)$ is the set of $\bF_p$-valued points of the reduction of $\cC$ to $\bF_p$.
Since $a_p(\cC)=0$ for all $p$ if $\ell=2$, we assume that $\ell$ is odd. 
Let $F=\bQ(\mu_\ell)$ be the $\ell$-th cyclotomic field.
To study $a_p(\cC)$, we consider the normalized Hasse-Weil $L$-function 
$L(s,\cC_F)$ of $\cC_F=\cC\times_{\bQ}F$, which has a functional equation between $s$ and $1-s$. 
Note that  $L(s, \cC_F)$ is expressed as an $L$-function of a sequence of matrices as in (\ref{eqmatrix}), and so we may consider the Deep Riemann Hypothesis (Conjecture \ref{conjDRH}) for $L(s, \cC_F)$. 
 Our first result is the following.

\begin{theorem}\label{thmmain1}
Assume that $\ell$ is an odd rational prime and 
put $m=\ord_{s=1/2}L(s, \cC_F)$.
The following statements are equivalent.
\begin{itemize}
\item[(i)] {\rm \bf DRH (A)} in Conjecture $\ref{conjDRH}$ holds for $L(s, \cC_F)$.
\item[(ii)] There exists a constant $c$ such that 
\begin{equation}\label{eqmain1}
\sum_{p\leq x}\frac{a_p(\cC)}{p}=\frac{g-m}{\ell-1}\log\log x+c+o(1) \es\es\es (x\to \infty).
\end{equation}
\end{itemize}
\end{theorem}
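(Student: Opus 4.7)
My approach is to reduce the equivalence to the analog of Kaneko--Koyama's \cite[Theorem 3.2]{KK23} applied to $L(s, \cC_F)$, and then perform a prime-splitting analysis in $F = \bQ(\mu_\ell)$ to descend from a sum over primes $\frp$ of $F$ to one over rational primes.

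First, I would take the logarithm of the partial Euler product $\prod_{q_\frp \leq x} L_\frp(1/2, \cC_F)$ and Taylor-expand each local factor as $\log L_\frp(1/2, \cC_F) = \sum_{k \geq 1} \Tr(\Phi_\frp^k)/(k q_\frp^k)$. The $k = 1$ term yields $\sum_{q_\frp \leq x} a_\frp(\cC_F)/q_\frp$; the $k = 2$ term equals, up to $O(1)$, the partial log of $L(s, \cC_F)^{(2)}$ at $s = 1$, which by a Mertens-type analysis contributes $(\delta(\cC_F)/2)\log\log x + O(1)$ with $\delta(\cC_F) = -\ord_{s=1} L(s, \cC_F)^{(2)}$; and the $k \geq 3$ terms are absolutely summable. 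Matching against the DRH (A) asymptotic $\sum_{q_\frp \leq x} \log L_\frp(1/2) = -m \log\log x + O(1)$ then gives the analog of \eqref{eqKK231},
\[
\sum_{q_\frp \leq x} \frac{a_\frp(\cC_F)}{q_\frp} = -\left(\frac{\delta(\cC_F)}{2} + m\right)\log\log x + c + o(1),
\]
as an equivalent reformulation of DRH (A) for $L(s, \cC_F)$.

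Next, I would split this $F$-prime sum according to the residue degree $f_p$ of $\frp/p$, which equals the order of $p$ modulo $\ell$. The degree-$1$ primes (above $p \equiv 1 \pmod \ell$) contribute $(\ell-1)\sum_{p \leq x,\, p \equiv 1 \pmod \ell} a_p(\cC)/p$; since the Frobenius permutes the one-dimensional Jacobi-sum summands $V_{a,b}$ of $H^1(\cC_{\ol{\bQ}}, \bQ_\ell) \ot \ol{\bQ}_\ell$ by $(a,b) \mapsto (pa, pb)$ with no fixed index when $p \not\equiv 1 \pmod \ell$, we have $a_p(\cC) = 0$ at such $p$, and the contribution becomes $(\ell - 1)\sum_{p \leq x} a_p(\cC)/p$. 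Primes of degree $\geq 3$ contribute $O(1)$ via the Weil bound $|\Tr(\Phi_p^f)| \leq 2g\, p^{f/2}$. The remaining degree-$2$ piece, $\frac{\ell-1}{2}\sum_{p \equiv -1 \pmod \ell,\, p \leq x^{1/2}} \Tr(\Phi_p^2)/p^2$, is the delicate one.

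To dispose of this degree-$2$ piece and simultaneously pin down $\delta(\cC_F)$, I would exploit the finer factorization $L(s, \cC_F) = \prod_{(a,b)} L(s, \chi_{a,b})$ into Hecke $L$-functions of $F$, with $\chi_{a,b}$ the Jacobi-sum Hecke character attached to $V_{a,b}$, and the induced factorization $L(s, \cC_F)^{(2)} = \prod_{(a,b)} L(s, \chi_{a,b}^2)$. The involution $(a,b) \mapsto (-a,-b)$, which is how $\Phi_p$ acts on indices at $p \equiv -1 \pmod \ell$, pairs the $V_{a,b}$'s into $g$ Frobenius-stable $2$-planes on which $\Phi_p^2$ acts as a scalar of absolute value $p$ given by a Jacobi sum over $\bF_{p^2}$. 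Mertens-type asymptotics applied to the relevant Hecke $L$-functions then yield both $\delta(\cC_F)$ and the degree-$2$ $\log\log x$-coefficient, and assembling the pieces and dividing by $\ell - 1$ produces the stated coefficient $(g-m)/(\ell-1)$ in \eqref{eqmain1}. The main obstacle is precisely this simultaneous analysis: both $\delta(\cC_F)$ and the degree-$2$ contribution are governed by the single family $\{\chi_{a,b}^2\}$, and verifying that their $\log\log x$-coefficients combine cleanly requires careful bookkeeping over the index set of admissible pairs $(a,b)$. Since every link in the chain is an asymptotic equivalence modulo $o(1)$, both implications (i) $\Leftrightarrow$ (ii) follow at once.
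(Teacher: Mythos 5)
Your skeleton matches the paper's: take logarithms of the partial Euler product, split the resulting double sum into the $n=1$, $n=2$ and $n\geq 3$ pieces, and decompose the $n=1$ piece by the residue degree $f_p$ of $\frp\mid p$, with the degree-one primes producing $(\ell-1)\sum_{p\leq x}a_p(\cC)/p$ and the degree $\geq 3$ primes contributing $O(1)$. But there is a genuine gap exactly at the step you yourself flag as "the delicate one." The degree-two contribution is
\[
\sum_{\substack{p\equiv -1 \ (\mathrm{mod}\ \ell) \\ p\leq \sqrt{x}}}\ \sum_{\frp\mid p}\ \sum_{(kt,t)\in I_\ell}\frac{J_{(kt,t)}(\frp)}{q_\frp}
\;=\;\sum_{\substack{p\equiv -1\ (\mathrm{mod}\ \ell) \\ p\leq\sqrt{x}}}\ \sum_{\frp\mid p}\ \sum_{(kt,t)\in I_\ell}\frac{\psi_{(kt,t)}(\frp)}{\sqrt{q_\frp}},
\]
i.e.\ a piece of the partial Euler product of the Hecke $L$-functions \emph{at the edge} $s=1/2$, restricted to degree-two primes. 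A "Mertens-type asymptotic applied to the relevant Hecke $L$-functions," which lives at $s=1$, cannot evaluate this sum; knowing only that $\Phi_\frp$ acts by "a scalar of absolute value $p$" leaves the unimodular quantities $J_{(kt,t)}(\frp)/p$ undetermined, and the $\log\log x$-coefficient you need depends precisely on their (constant) value. The paper supplies the missing arithmetic input as Lemma \ref{lemGR} (Gross--Rohrlich): $J_{(kt,t)}(\frp)=-\sqrt{q_\frp}$ whenever $f_p$ is even, so $\Tr(M_\cC(\frp))=-2g$ at every degree-two prime, and the degree-two sum becomes $-g(\ell-1)\sum_{p\equiv -1,\ p\leq\sqrt x}1/p=-g\log\log x+O(1)$ by Mertens in arithmetic progressions. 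This identity is where the $g$ in the coefficient $(g-m)/(\ell-1)$ comes from, and no amount of bookkeeping over the index set replaces it.

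A secondary, repairable point: you route the $n=2$ term through $\delta(\cC_F)=-\ord_{s=1}L(s,\cC_F)^{\SSC(2)}$ but never compute it. The paper avoids $\delta(\cC_F)$ altogether by showing directly (Lemma \ref{lemMer}) that $\sum_{q_\frp\leq x}J_{(kt,t)}(\frp)^2/q_\frp^2$ converges, the key fact being Lemma \ref{lemnon}: since $F/\bQ$ is unramified at $p$, one has $\sqrt{\pm p}\notin F$, so $\psi_{(kt,t)}^2$ is a \emph{non-trivial} unitary Gr\"ossencharacter and its $L$-function is holomorphic and non-vanishing at $s=1$. Your factorization $L(s,\cC_F)^{\SSC(2)}=\prod L(s,\chi_{a,b}^2)$ would give $\delta(\cC_F)=0$ by the same token, but the non-triviality of the squared characters must be proved, not assumed. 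With Lemma \ref{lemGR} and Lemma \ref{lemnon} inserted, your argument closes and agrees with the paper's; without them, the coefficient in (\ref{eqmain1}) is not derived.
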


We also find an asymptotic formula for other curves under DRH.
For each integer $1\leq k \leq \ell-2$, let $\cC_k$ be a smooth projective curve over $\bQ$ whose affine equation is given by $v^\ell=u(u+1)^{\ell-k-1}$. 
It is known that  $\cC_k$ is a quotient of the Fermat curve  $\cC$, and that  its Jacobian variety $\Jac(\cC)$ is isogenous over $\bQ$  to the abelian variety $\prod_{k=1}^{\ell-2}\Jac(\cC_k)$ (cf.\ \cite{FGL16} and  Proposition \ref{propF}).
This means that  the $L$-function $L(s,\cC_F)$ factors as 
\[
L(s,\cC_F)=\prod_{k=1}^{\ell-2}L(s,\cC_{k,F}),
\] 
where $\cC_{k,F}=\cC_k\times_{\bQ}F$.
Let $a_p(\cC_k)=p+1-\#\cC_k(\bF_p)$ for any rational prime $p$.
Then we have the following.

\begin{theorem}\label{thmmain2}
Let $\ell$ be an odd rational prime and 
$1 \leq k \leq \ell-2$ an integer.
 Put $m_k=\ord_{s=1/2}L(s, {\cC_{k, F}})$.
The following statements are equivalent. 
\begin{itemize}
\item[(i)] {\rm \bf DRH (A)} in Conjecture $\ref{conjDRH}$ holds for $L(s,{\cC_{k, F}})$.
\item[(ii)] There exists a constant $c_k$ such that 
\begin{equation}\label{eqmain2}
\sum_{p\leq x}\frac{a_p(\cC_k)}{p}=\frac{g'-m_k}{\ell-1}\log\log x+c_k+o(1)\es\es\es (x\to \infty), 
\end{equation}
where $g'=(\ell-1)/2$ is the genus of $\cC_k$.
\end{itemize}
\end{theorem}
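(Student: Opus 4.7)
The plan is to mimic the proof of Theorem \ref{thmmain1} applied to each quotient curve $\cC_k$ individually. Two structural inputs make this possible: a concrete factorization of $L(s, \cC_{k, F})$ into Hecke $L$-functions with Jacobi-sum local factors, and the paper's general translation (of Kaneko--Koyama type, cf.\ (\ref{eqKK231})) from \textbf{DRH (A)} to a critical-line asymptotic for the corresponding sum of Frobenius traces.

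First I would recall that $\cC_k$, being an $\ell$-cyclic cover of $\bP^1$ branched over $\{0, -1, \infty\}$, carries a canonical action of $\mu_\ell$ defined over $\bQ$. Base-changing to $F = \bQ(\mu_\ell)$ and decomposing $\Jac(\cC_{k, F})$ under the induced $\mu_\ell$-action yields a factorization
\[
L(s, \cC_{k, F}) = \prod_{j} L(s, \chi_{k, j}),
\]
where each $\chi_{k, j}$ is a unitary Hecke character of $F$ of finite order whose value at an unramified prime $\frp \nmid \ell$ is a normalized Jacobi sum depending on $k$ and $j$. Feeding this factorization into the partial-Euler-product formalism and applying \textbf{DRH (A)} factor by factor yields the equivalent asymptotic
\[
\sum_{q_\frp \leq x} \frac{a_\frp(\cC_{k, F})}{q_\frp} = (g' - m_k)\log\log x + c' + o(1).
\]
The coefficient here assembles the orders of vanishing of the $L$-factors at $s = 1/2$ together with their second-moment contribution at $s = 1$; the complex multiplication by $\bZ[\mu_\ell]$ on each piece causes these contributions to collapse into the clean value $g' - m_k$.

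The second step descends from primes of $F$ to rational primes. A prime $p \neq \ell$ splits completely in $F$ precisely when $p \equiv 1 \pmod \ell$, in which case each of the $\ell - 1$ primes $\frp \mid p$ has residue field $\bF_p$ and $a_\frp(\cC_{k, F}) = a_p(\cC_k)$. If $p \not\equiv 1 \pmod \ell$ then every $\frp \mid p$ has residue degree $f \geq 2$, so the Weil bound gives $|a_\frp|/q_\frp \leq 2g' p^{-f/2}$ and such primes contribute only $O(1)$ in total; on the $\bQ$-side, the map $v \mapsto v^\ell$ is a bijection on $\bF_p$ when $p \not\equiv 1 \pmod \ell$, so that $v^\ell = u(u+1)^{\ell-k-1}$ has exactly one solution $v$ for each $u \in \bF_p$, forcing $\#\cC_k(\bF_p) = p + 1$ and hence $a_p(\cC_k) = 0$. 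Combining these observations gives
\[
(\ell - 1)\sum_{p \leq x} \frac{a_p(\cC_k)}{p} = \sum_{q_\frp \leq x} \frac{a_\frp(\cC_{k, F})}{q_\frp} + O(1),
\]
and dividing the critical-line asymptotic by $\ell - 1$ yields (\ref{eqmain2}) with the prescribed coefficient $(g' - m_k)/(\ell - 1)$.

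The main obstacle I anticipate lies in the first step: pinning down the Hecke characters $\chi_{k, j}$ explicitly enough to confirm their Jacobi-sum nature, to check that their product over $j$ reproduces $L(s, \cC_{k, F})$, and to ensure that the ramified local factor at $\ell$ contributes only $O(1)$ to the partial Euler product so that \textbf{DRH (A)} can be applied componentwise and the second-moment contribution is identified. Once this is secured, the descent from $F$ to $\bQ$ and the passage between (i) and (ii) are routine, and running the chain of equivalences in either direction closes both implications.
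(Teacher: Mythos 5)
Your overall architecture — factor $L(s,\cC_{k,F})$ into Jacobi-sum Hecke $L$-functions as in (\ref{eqJacobiL}), take the logarithm of the partial Euler product, and descend from primes of $F$ to rational primes by residue degree — is the same as the paper's, but the two quantitative claims at the core of your argument are both wrong, and they happen to cancel. First, your intermediate asymptotic over $F$ is incorrect. Writing $a_\frp(\cC_{k,F})=\sqrt{q_\frp}\,\Tr(M_{\cC_k}(\frp))=\sum_{t\in G}J_{(kt,t)}(\frp)$, the Kaneko--Koyama translation of {\bf DRH (A)} produces the coefficient $-(\delta/2+m_k)$ of $\log\log x$, where $\delta=-\ord_{s=1}\prod_{t\in G}L(s,\psi_{(kt,t)}^2)$. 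Each $\psi_{(kt,t)}^2$ is a non-trivial Hecke character (Lemma \ref{lemnon}), so its $L$-function neither vanishes nor has a pole at $s=1$; hence $\delta=0$ and the correct coefficient is $-m_k$, not $g'-m_k$. No ``second-moment contribution at $s=1$'' supplies the extra $g'$, and the appeal to complex multiplication does not justify it.

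Second, your descent step asserts that primes with $f_p\ge 2$ contribute $O(1)$; this fails exactly at $f_p=2$. For $p\equiv-1\pmod\ell$ the Weil bound gives only $|a_\frp(\cC_{k,F})|/q_\frp\le 2g'/p$, and summing $1/p$ over such $p\le\sqrt{x}$, with $(\ell-1)/2$ primes $\frp$ above each, diverges like $\log\log x$. In fact Lemma \ref{lemGR} gives $J_{(kt,t)}(\frp)=-\sqrt{q_\frp}$ whenever $f_p$ is even, so the degree-two primes contribute precisely $-g'\log\log x+O(1)$ by Mertens/Dirichlet in the progression $p\equiv-1\pmod\ell$; this evaluation, not anything happening at $s=1$, is the actual source of the $g'$ in (\ref{eqmain2}), and it is the one genuinely arithmetic input your proposal omits. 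Because the two errors offset — $(g'-m_k)$ instead of $-m_k$ upstairs, $O(1)$ instead of $-g'\log\log x$ in the descent — you land on the right formula, but neither implication is proved as written. The repair is the paper's argument: establish $\sum_{q_\frp\le x}a_\frp(\cC_{k,F})/q_\frp=-m_k\log\log x+O(1)$ under {\bf DRH (A)} (the $n=2$ term being bounded by Lemma \ref{lemMer} and the $n\ge 3$ tail absolutely convergent), then split by residue degree with the $f=1$ part equal to $(\ell-1)\sum_{p\le x}a_p(\cC_k)/p$, the $f=2$ part equal to $-g'\log\log x+O(1)$ via Lemma \ref{lemGR}, and the $f>2$ part bounded.
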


The key to prove  Theorems \ref{thmmain1} and \ref{thmmain2} 
is the fact that the $L$-functions $L(s, \cC_F)$ and $L(s, {\cC_{k,F}})$ are expressed as a product of Hecke $L$-functions of Jacobi sum 
Gr\"o{\ss}encharakter; see \cite{Wei52} and (\ref{eqJacobiL}).
%Note  that we obtain the asymptotic (\ref{eqmain1})  by  combining (\ref{eqmain2}) for all $1\leq k \leq \ell-2$.
%Indeed, the relation $L(s,M_\cC)=\prod_{k=1}^{\ell-2}L(s, M_{\cC_k})$ means $m=\sum_{k=1}^{\ell-2}m_k$ and we have $a_p(\cC)=\sum_{k=1}^{\ell-2}a_p(\cC_k)$ by Lemma \ref{lemtr} for every rational prime $p$.

We have an application of the main results as follows.
 Put $m_0=\ord_{s=1/2}L(s, \cC)$ and $m_{k, 0}=\ord_{s=1/2}L(s, \cC_k)$ for the curves $\cC$ and $\cC_k$ over $\bQ$.
 Then we have 
$m=(\ell-1)m_0$ and $m_k=(\ell-1)m_{k, 0}$ by (\ref{eql1}).
Hence the formulas (\ref{eqmain1}) and (\ref{eqmain2}) in Theorems \ref{thmmain1} and \ref{thmmain2} are equivalent to 
\[
\sum_{p\leq x}\frac{a_p(\cC)}{p}=\left(\frac{\ell-2}{2}-m_0\right)\log\log x+c+o(1) \es\es\es (x\to \infty)
\]
and 
\[
\sum_{p\leq x}\frac{a_p(\cC_k)}{p}=\left(\frac{1}{2}-m_{k, 0}\right)\log\log x+c_k+o(1)\es\es\es (x\to \infty),
\]
respectively.
Using these expressions, we can compute the order of zero at $s=1$ for the second moment $L$-functions 
$L(s, \cC)^{\SSC (2)}$ and $L(s, \cC_k)^{\SSC (2)}$; see Corollaries \ref{cor1} and \ref{cor2}.

%%%%%%%%%%%%%%%%%%%%%%%%%%%%%%%%%%%%%%%%%%%%%%%%%
%
%
% 2 Deep Riemann Hypothesis
%
%
\section{The Deep Riemann Hypothesis}

We shall recall the assertion of the Deep Riemann Hypothesis 
in a general setting.
Let $K$ be a global field. 
For all but finitely many primes $\frp$ of $K$, let $M(\frp)\in \GL_{r_\frp}(\bC)$ be a 
 unitary matrix of degree $r_\frp \in \bZ_{\geq 1}$, and define $M(\frp)=0$ for the remaining finite number of $\frp$.
 Denote $q_\frp=\#\bF_\frp$ for the residue field $\bF_\frp$ at $\frp$.
Then the $L$-function of the sequence $M=\{M(\frp)\}_{\frp}$ is given by the Euler product 
\begin{equation}\label{eqLM}
L(s, M)=\prod_{\frp}\det(1-M(\frp)q_\frp^{-s})^{-1},
\end{equation}
which is absolutely convergent for $\Re(s)>1$.

Now we assume that $L(s, M)$ has an analytic continuation as an entire function on $\bC$ and a functional equation between values at $s$ and $1-s$.  
Define the {\it second moment $L$-function}  
of $L(s,M)$ by 
\begin{equation}
L(s, M^2)=\prod_{\frp}\det(1-M(\frp)^2q_\frp^{-s})^{-1}
\end{equation}
and put
\[
\delta(M)=-\ord_{s=1}L(s,M^2).
\]
It is known that $L(s,M^2)$ is expressed as 
\[
L(s,M^2)=\frac{L(s,\Sym^2M)}{L(s,\wedge^2M)}, 
\]
where $\Sym^2$ and $\wedge^2$ are the symmetric and exterior squares, respectively.
Hence 
one has 
\[
\delta(M)=-\ord_{s=1}L(s,\Sym^2M)+\ord_{s=1}L(s,\wedge^2M).
\]

The Deep Riemann Hypothesis claims the convergence of the Euler product at $s=1/2$:

\begin{conjecture}[Deep Riemann Hypothesis]\label{conjDRH}
The notation is as above.
Set $m=\ord_{s=1/2}L(s,M)$.
Then the limit 
\begin{equation}\label{eqDRH}
\lim_{x\to \infty}\left((\log x)^m\prod_{q_\frp \leq x}\det\left(1-M(\frp)q_\frp^{-\frac{1}{2}}\right)^{-1} \right)
\end{equation}
satisfies 
\begin{itemize}
\setlength{\leftskip}{30pt}
\item[{\rm\bf DRH (A):}] the limit $(\ref{eqDRH})$ exists and is non-zero. 
\item[{\rm\bf DRH (B):}] the limit $(\ref{eqDRH})$ converges to 
\[
\left.\frac{\sqrt{2}^{\delta(M)}}{e^{m\gamma}m!}\cdot L^{(m)}(s,M)\right|_{s=\frac{1}{2}},
\]
where $\gamma$ is the Euler constant and $L^{(m)}(s,M)$ is the differential of order $m$.
\end{itemize}
\end{conjecture}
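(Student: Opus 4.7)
Conjecture \ref{conjDRH} is the Deep Riemann Hypothesis itself, a well-known open problem that is strictly stronger than GRH and presently known unconditionally only in restricted settings (e.g.\ cuspidal automorphic $L$-functions in positive characteristic, as recalled before equation (\ref{eqKK232})); so my ``proof'' is really a roadmap a genuine argument would have to follow, together with an honest identification of where the true difficulty sits. The plan is to pass to the logarithm of the partial Euler product and isolate the sources of divergence. Since each $M(\frp)$ is unitary, expanding the logarithm gives
\[
\log\prod_{q_\frp\leq x}\det\left(1-M(\frp)q_\frp^{-1/2}\right)^{-1}
= S_1(x) + S_2(x) + R(x),
\]
with $S_1(x)=\sum_{q_\frp\leq x}\Tr(M(\frp))\,q_\frp^{-1/2}$, $S_2(x)=\tfrac{1}{2}\sum_{q_\frp\leq x}\Tr(M(\frp)^2)\,q_\frp^{-1}$, and $R(x)$ a tail in higher powers of $q_\frp^{-1/2}$ that is absolutely convergent and contributes only to the limiting constant.

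One would then analyse $S_2$ through the behaviour of $L(s,M^2)$ at $s=1$, and $S_1$ through that of $L(s,M)$ at $s=1/2$. For $S_2$, a standard Tauberian / Mertens-type argument using $\delta(M)=-\ord_{s=1}L(s,M^2)$ yields $S_2(x)=\tfrac{\delta(M)}{2}\log\log x + C + o(1)$; exponentiation, combined with the Euler-constant contribution from $\sum_{q_\frp\leq x}q_\frp^{-1}=\log\log x+\gamma+o(1)$, produces the factors $\sqrt{2}^{\delta(M)}$ and $e^{-m\gamma}$ appearing in DRH (B). For $S_1$ one would apply Perron's formula to $L'(s,M)/L(s,M)$ and push the contour across $\Re(s)=1/2$: the order-$m$ zero there is exactly what supplies both the renormalising factor $(\log x)^m$ and the Taylor coefficient $L^{(m)}(1/2,M)/m!$. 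Granting DRH (A), careful bookkeeping of these contributions would then deliver DRH (B).

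The central obstacle is establishing DRH (A) itself, i.e.\ the convergence of $(\log x)^m\exp(S_1(x))$ to a finite non-zero limit. GRH places the non-trivial zeros on $\Re(s)=1/2$ but says nothing about the delicate oscillatory cancellation of a partial Euler product evaluated on that same line, and it is here that DRH genuinely goes beyond GRH. A proof would therefore require either a quantitative explicit formula significantly sharper than what GRH provides, or a structural equidistribution input for the Frobenius traces $\Tr(M(\frp))$ of the type that Deligne's theorem supplies in the function-field setting (which is precisely the reason the argument of \cite{KKK23} succeeds when $\mathrm{char}(K)>0$). In the arithmetic generality of Conjecture \ref{conjDRH}, bridging this gap is the essential unresolved difficulty, and the reason the paper under review only aims at equivalences with DRH rather than at DRH itself.
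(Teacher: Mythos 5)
The statement you were asked about is Conjecture \ref{conjDRH}, the Deep Riemann Hypothesis itself; the paper contains no proof of it, uses it only as a hypothesis in Theorems \ref{thmmain1} and \ref{thmmain2}, and explicitly records that it is known unconditionally only in special positive-characteristic cases. You correctly recognized this, and your refusal to manufacture a proof is the right call --- there is simply nothing in the paper to compare a proof against. For what it is worth, the decomposition you sketch, namely taking the logarithm of the partial Euler product and splitting it as $S_1(x)+S_2(x)+R(x)$ with $S_1(x)=\sum_{q_\frp\leq x}\Tr(M(\frp))q_\frp^{-1/2}$, $S_2(x)=\tfrac{1}{2}\sum_{q_\frp\leq x}\Tr(M(\frp)^2)q_\frp^{-1}$, and an absolutely convergent tail, is exactly the manipulation the paper performs (there written as $\mathrm{I}(x)+\mathrm{II}(x)+\mathrm{III}(x)$) in the proofs of Theorems \ref{thmmain1} and \ref{thmmain2}; but there it is used only to show that {\bf DRH (A)} is \emph{equivalent} to an asymptotic for $\sum_{p\leq x}a_p(\cC)/p$, never to establish {\bf DRH (A)}. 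Your identification of the genuine obstruction --- that GRH controls the location of zeros but not the oscillatory cancellation of the partial Euler product on the critical line, so that the convergence in {\bf DRH (A)} is strictly beyond GRH --- is accurate and is precisely why the paper aims only at equivalences. No gap to report beyond the one you already name, which is the open problem itself.
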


The Deep Riemann Hypothesis is formulated for various classes of $L$-functions (cf.\ \cite{Aka17}, \cite{KKK23}, \cite{KK22}, and \cite{KS14}). 
We should notice that when $\mathrm{char}(K)>0$,  DRH for Dirichlet $L$-functions and automorphic $L$-functions of non-trivial cuspidal automorphic representations holds by \cite[Theorem 1]{KKK14} and \cite[Theorem 5.5]{KKK23}.

It is known that the Deep Riemann Hypothesis is closely related to prime number races for arithmetic objects.

\begin{example}[Aoki and Koyama \cite{AK23}]\label{exAK}
Let $K=\bQ$ and $r_p=1$ for any rational prime $p$.
Let $\chi_{-4}$ be the non-trivial Dirichlet character modulo $4$, which is characterized by  $\chi_{-4}(2)=0$ and 
$\chi_{-4}(p)=(-1)^{\frac{p-1}{2}}$ if $p$ is odd.
Thus if we set $M(p)=\chi_{-4}(p)$, then $L(s, M)=L(s, \chi_{-4})$ and $\delta(M)=1$. 
Then {\bf DRH (A)} for $L(s, \chi_{-4})$ is equivalent to the asymptotic 
(\ref{eqmod4}) in \S 1.
\end{example}

\begin{example}[Kaneko and Koyama \cite{KK23}]\label{exKK}
Let $E$ be an elliptic curve over $K$ and $\frp$ a prime of $K$.
If $E$ has good reduction at $\frp$, then we set 
\[
a_\frp(E)=q_\frp+1-\#E(\bF_\frp).
\]
If $E$ is bad at $\frp$, then we define
\[
a_\frp(E)=
\begin{cases}
1 &\mbox{if}\ E\ \mbox{has split multiplicative reduction at}\ \frp,\\
-1&\mbox{if}\ E\ \mbox{has non-split multiplicative reduction at}\ \frp, \\
0& \mbox{if}\ E\ \mbox{has additive reduction at}\ \frp.
\end{cases}
\]
Define the parameter $\theta_\frp \in [0, \pi] \simeq \mathrm{Conj}(\mathrm{SU}(2))$ by $a_\frp(E)=2\sqrt{q_\frp}\cos \theta_\frp$.
We put 
\[
M_E(\frp)=
\begin{cases}
\begin{pmatrix}
e^{\theta_\frp\sqrt{-1}} & 0 \\
0 & e^{-\theta_\frp\sqrt{-1}}
\end{pmatrix} & \mbox{if}\ \frp\ \mbox{is good},\\
a_\frp(E) & \mbox{if}\ \frp\ \mbox{is bad}.
\end{cases}
\]
Then for the sequence $M_E=\{M_E(\frp)\}_\frp$, the $L$-function (\ref{eqLM}) is equal to 
\[
L(s, M_E)=\prod_{\frp:\ \mbox{\tiny good}}(1-2\cos(\theta_\frp)q_\frp^{-s}+q_\frp^{-2s})^{-1}\prod_{\frp:\ \mbox{\tiny bad}}(1-a_\frp(E)q_\frp^{-s})^{-1}.
\]
This Euler product is absolutely convergent for $\Re(s)>1$, and has an analytic continuation to $\bC$ and a functional equation between $s$ and $1-s$. 
Note that this $L$-function coincides with the normalized Hasse-Weil $L$-function $L(s, E)$ of $E$.
Thus we may define the second moment $L$-function by 
$L(s, E)^{\SSC (2)}=L(s, M_E^2)$ and set $\delta(E)=\delta(M_E)$.
Then it  is shown in  \cite[Theorem 3.2]{KK23} that {\bf DRH (A)} for $L(s, E)$ is equivalent to the asymptotic (\ref{eqKK231}).
Moreover, if $\mathrm{char}(K)>0$ and $E$ is non-constant (i.e., there are no elliptic curves $E'$ over a finite field $\bF$ with $\bF \subset K$ such that $
E\cong E'\times_{\bF}K$), then \cite[Theorem 1.3]{KK23} shows that the asymptotic (\ref{eqKK232}) holds.  
\end{example}

%%%%%%%%%%%%%%%%%%%%%%%%%%%%%%%%%%%%%%%%%%%%%%%%%
%
%
%  3 Results 
%
%
\section{Arithmetic of Fermat curves}

In what follows, we fix an odd rational prime $\ell$ and denote by $\mu_{\ell} \subset \bC$ the group 
 consisting of all $\ell$-th roots of unity.
Set $F=\bQ(\mu_\ell)$ and denote its ring of integers by $\cO_F$.
For a prime $\frp$ of $F$ lying above a rational prime $p$, let $\bF_\frp=\cO_F/\frp$ be the residue field at $\frp$.
Set $q_\frp=\#\bF_\frp$ and write $f_p=[\bF_\frp:\bF_p]$ for the absolute residue degree, so that $q_\frp=p^{f_p}$.
As is well-known, $f_p$ is the smallest positive integer $f$ satisfying $p^{f}\equiv 1\pmod \ell$. 
We often identify the multiplicative group $G=(\bZ/\ell\bZ)^*$ with the Galois group $\Gal(F/\bQ)$ via
\[
G\ni t \longmapsto \sigma_t \in \Gal(F/\bQ),
\]
where ${}^{\sigma_t}\zeta_\ell=\zeta^t_\ell$ for any $\zeta_\ell\in \mu_\ell$.

\subsection{Jacobi sum Gr\"o{\ss}encharakter} 

Let $p$ be a rational prime with $p\neq \ell$ and $\frp \mid p$ a prime of $F$. 
Since the polynomial $T^{\ell}-1$ is reducible and separable over both $F$ and $\bF_\frp$, for any $\lambda \in \bF_\frp^*$, there exists a unique  $\ell$-th root of unity $\chi_\frp(\lambda) \in \mu_\ell$ satisfying
\[
\chi_\frp(\lambda)\equiv \lambda^{\frac{q_\frp-1}{\ell}} \pmod \frp.
\]
This gives rise to a multiplicative character $\chi_\frp\colon \bF_\frp^* \ra \mu_\ell \subset F^*$ of order $\ell$, which is called the {\it $\ell$-th power residue symbol} at $\frp$.
Defining $\chi_\frp(0)=0$, we regard $\chi_\frp$ as a function 
$\bF_\frp\ra F$.

Let us consider the index set 
\[
I_\ell=\{(k_1, k_2) \in G\times G \mid k_1+k_2\neq 0\}.
\]
For $(k_1,k_2)\in I_\ell$, the {\it Jacobi sum} is defined by 
\begin{equation}
J_{(k_1,k_2)}(\frp)=-\sum_{\lambda\in \bF_\frp}\chi_\frp(\lambda)^{k_1}\chi_\frp(1-\lambda)^{k_2} \in F.
\end{equation}
It is well-known that $J_{(k_1, k_2)}(\frp)$  for $(k_1, k_2)\in I_\ell$ satisfies 
 \begin{equation}\label{eqabsolute}
 |J_{(k_1, k_2)}(\frp)|=\sqrt{q_\frp}
 \end{equation}
and for each $t\in G$, 
\begin{equation}\label{eqjacobi}
J_{(k_1t, k_2t)}(\frp)={}^{\sigma_t}J_{(k_1, k_2)}(\frp)=J_{(k_1, k_2)}({}^{\sigma_t}\frp).
\end{equation}
%In particular, since $-1\in G$ corresponds to the complex conjugate $(\zeta_\ell\mapsto \ol{\zeta_\ell})\in \Gal(F/\bQ)$, we have 
%$\ol{J_{(k_1,k_2)}(\frp)}=J_{(-k_1,-k_2)}(\frp)$.

Now the multiplicative group $G$ acts on $I_\ell$ via multiplication on both components 
\[
(k_1, k_2) \mapsto (k_1t, k_2t)
\]
for $t\in G$.
Then any $G$-orbit is represented by $(k, 1)$ for some $1\leq k \leq \ell-2$.
Thus we have 
\[
I_\ell=\{(kt, t) \in G\times G \mid t\in G,\ 1\leq k \leq  \ell-2\} 
\]
and  $J_{(kt, t)}(\frp)=J_{(k, 1)}({}^{\sigma_t}\frp)$.

Take $(kt,t)\in I_\ell$.
For the Jacobi sum $J_{(kt, t)}(\frp)\in F$, we have the following.

%Let $H_{f_p}$ be the subgroup of $G$ defined by 
%\[
%H_{f_p}=\{p^{f} (\mbox{mod}\ \ell)\in G  \mid 0 \leq f <f_p \}.
%\]
%Then $J_{(kt,t)}(\frp)$ is fixed by $H_{f_p}$.
%Since $f_p$ is even if and only if  $-1 \in H_{f_p}$, we have 
%$J_{(kt, t)}(\frp) \in \bR$ in this case.
%Moreover, we have the following.

\begin{lemma}[{\cite[Lemma 1.1]{GR78}}]\label{lemGR}
If $f_p$ is even, then $J_{(kt, t)}(\frp)=-\sqrt{q_\frp}$.
\end{lemma}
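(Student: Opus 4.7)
My plan is to proceed in two steps: first, show that $J_{(kt,t)}(\frp)$ is real, hence equal to $\pm\sqrt{q_\frp}$ by $(\ref{eqabsolute})$; then determine the correct sign by reducing modulo a prime above $\ell$.

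For reality, I would use that complex conjugation on $F = \bQ(\mu_\ell)$ coincides with $\sigma_{-1}$ under the identification $G \simeq \Gal(F/\bQ)$. Since $p \neq \ell$ is unramified in $F$, the decomposition group at $\frp$ is the cyclic subgroup $\langle \sigma_p \rangle \subset G$ of order $f_p$. When $f_p$ is even this subgroup contains the unique element of order $2$ in the cyclic group $G$, namely $\sigma_{-1}$; hence $\sigma_{-1}\frp = \frp$. Combining this with $(\ref{eqjacobi})$ yields
\[
\overline{J_{(kt,t)}(\frp)} \;=\; {}^{\sigma_{-1}}J_{(kt,t)}(\frp) \;=\; J_{(kt,t)}(\sigma_{-1}\frp) \;=\; J_{(kt,t)}(\frp),
\]
so $J_{(kt,t)}(\frp) \in \bR$; together with $(\ref{eqabsolute})$ this forces $J_{(kt,t)}(\frp) = \pm\sqrt{q_\frp}$.

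For the sign, I would reduce modulo the prime $\frl = (1 - \zeta_\ell) \subset \cO_F$, which has residue field $\bF_\ell$. Every value of $\chi_\frp$ on $\bF_\frp^*$ lies in $\mu_\ell$ and thus reduces to $1$ modulo $\frl$, while the summands with $\lambda = 0$ or $\lambda = 1$ vanish, so
\[
J_{(kt,t)}(\frp) \;\equiv\; -(q_\frp - 2) \;\equiv\; 1 \pmod{\ell},
\]
since $\ell \mid q_\frp - 1$. Writing $f_p = 2m$, the minimality of $f_p$ forces $p^m$ to have order exactly $2$ in $(\bZ/\ell\bZ)^*$, so $\sqrt{q_\frp} = p^m \equiv -1 \pmod{\ell}$; hence $+\sqrt{q_\frp} \equiv -1$ while $-\sqrt{q_\frp} \equiv 1$ in $\bF_\ell$. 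Since $\ell$ is odd these residues are distinct, and only the minus sign is consistent with $J_{(kt,t)}(\frp) \equiv 1 \pmod{\ell}$, yielding $J_{(kt,t)}(\frp) = -\sqrt{q_\frp}$.

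The main (and essentially only) subtlety is the sign determination: reality is a clean Galois-theoretic consequence of $(\ref{eqabsolute})$ and $(\ref{eqjacobi})$, whereas distinguishing $+\sqrt{q_\frp}$ from $-\sqrt{q_\frp}$ relies on the mod-$\ell$ reduction, which crucially uses both that $\ell$ is odd (so $\pm 1$ are distinct in $\bF_\ell$) and that $f_p$ is even (so $p^{f_p/2} \equiv -1 \pmod{\ell}$); without either hypothesis the argument collapses.
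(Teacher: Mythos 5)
Your proof is correct. Note that the paper does not prove this lemma at all --- it simply cites \cite[Lemma 1.1]{GR78} --- and your argument is exactly the standard one underlying that citation: since $f_p$ is even, $\sigma_{-1}$ (complex conjugation) lies in the decomposition group $\langle\sigma_p\rangle$, so (\ref{eqjacobi}) and (\ref{eqabsolute}) force $J_{(kt,t)}(\frp)=\pm\sqrt{q_\frp}$, and the sign is pinned down by the congruence $J_{(kt,t)}(\frp)\equiv -(q_\frp-2)\equiv 1 \pmod{\frl}$ for $\frl=(1-\zeta_\ell)$ together with $p^{f_p/2}\equiv -1 \pmod{\ell}$. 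All the hypotheses are used where they must be (oddness of $\ell$ to separate $\pm 1$ in $\bF_\ell$, the extra minus sign in the paper's normalization of the Jacobi sum, and $(kt,t)\in I_\ell$ so that (\ref{eqabsolute}) applies), so the write-up stands as a complete substitute for the external reference.
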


On the other hand, we have the following.

\begin{lemma}\label{lemnon}
If $f_p=1$, 
then neither $J_{(kt, t)}(\frp)$ nor $J_{(kt, t)}(\frp)^2$ belongs to $\bR$.
\end{lemma}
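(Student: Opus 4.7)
The plan is to exploit the absolute value formula $|J_{(kt,t)}(\frp)|=\sqrt{q_\frp}=\sqrt{p}$ (valid since $f_p=1$ forces $q_\frp=p$) together with the structure of the quadratic subfields of $F=\bQ(\mu_\ell)$. The point is that if $J:=J_{(kt,t)}(\frp)$ or $J^2$ were real, then $J^2$ would have to equal $\pm p$, forcing $\sqrt{\pm p}$ into $F$, which turns out to be impossible because $p\neq \ell$.

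Concretely, I would first record that complex conjugation on $F$ coincides with $\sig_{-1}\in\Gal(F/\bQ)$, so $\ol{J}\in F$ and $J\ol{J}=|J|^2=p$. If $J\in\bR$, then $J=\ol{J}$ gives $J^2=p$, so $\sqrt{p}\in F$. If only $J^2\in\bR$, then $(J/\ol{J})^2=1$, hence $J=\pm\ol{J}$; the minus case yields $J^2=-J\ol{J}=-p$, so $\sqrt{-p}\in F$. Thus in every case either $\sqrt{p}$ or $\sqrt{-p}$ lies in $F$.

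Next I would use the well-known fact that $F/\bQ$ is cyclic of degree $\ell-1$, whose unique quadratic subfield is $\bQ(\sqrt{\ell^*})$ with $\ell^*=(-1)^{(\ell-1)/2}\ell$. Hence $\sqrt{p}\in F$ would force $\bQ(\sqrt{p})=\bQ(\sqrt{\ell^*})$, i.e.\ $p\ell^*$ is a positive rational square; since $p$ and $\ell$ are distinct odd primes, $p\ell^*=\pm p\ell$ is either negative or a product of two distinct primes, a contradiction. The same argument rules out $\sqrt{-p}\in F$: we would need $-p\ell^*$ to be a positive rational square, but $-p\ell^*=\mp p\ell$ is again either negative or a squarefree product of two distinct primes.

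Combining the two contradictions closes the proof. I do not expect any serious obstacle: the only ingredients are the absolute value identity (\ref{eqabsolute}), the CM structure of $F$, and the classification of its quadratic subfield. The mildest subtlety is keeping track of the sign of $\ell^*$ depending on $\ell\bmod 4$ when ruling out $\sqrt{-p}\in F$, but this is handled uniformly by observing that $\pm p\ell$ is never a rational square for distinct primes $p\neq\ell$.
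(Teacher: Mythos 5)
Your proof is correct and follows essentially the same route as the paper: both arguments use $|J_{(kt,t)}(\frp)|=\sqrt{p}$ to reduce the claim to showing $\sqrt{p},\sqrt{-p}\notin F$. The only (minor) difference is how that exclusion is justified --- the paper observes that $F/\bQ$ is unramified at $p$ while $\bQ(\sqrt{\pm p})$ is ramified there, whereas you identify the unique quadratic subfield of $F$ as $\bQ(\sqrt{\ell^*})$ and check that $\pm p\ell^*$ is never a square; both are standard and equally valid.
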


\begin{proof}
By (\ref{eqabsolute}) and $f_p=1$, we have $|J_{(kt,t)}(\frp)|=\sqrt{p}$.
Since $F/\bQ$ is unramified at $p$, we have $\sqrt{p}, \sqrt{-p}\notin F$.
This implies that $J_{(kt, t)}(\frp) \neq \pm\sqrt{p}, \pm\sqrt{-p}$.
Hence we have $J_{(kt,t)}(\frp) \notin \bR$ and $J_{(kt,t)}(\frp)^2 \notin \bR$.
\end{proof}

%\begin{proof}
%Let $W_{k,f_p}=W_{k,1}=\{w \in G \mid {}^{\sigma_w}(J_{(kt,t)}(\frp))=J_{(kt,t)}(\frp)\}$ be the stabilizer of $J_{(kt,t)}(\frp)$.
%It follows by \cite[Proposition 4.9 and Lemma 5.1]{FGL16} that we have either $\#W_{k,1}=1$ or $\#W_{k,1}=3$; in particular, we have 
%$-1 \notin W_{k,1}$.
% Hence $J_{(kt, t)}(\frp)\notin \bR$.
%Let denote $J_{(kt, t)}(\frp)=A+B\sqrt{-1}\in F$ with  $A, B\in \bR$ and $B\neq 0$.
%Since $|J_{(kt, t)}(\frp)|^2=A^2+B^2=p$ by (\ref{eqabsolute})
%and $\sqrt{-p} \notin F$ because $F/\bQ$ is unramified at $p$, we have $A\neq 0$ and hence $J_{(kt, t)}(\frp)^2\notin \bR$.
%\end{proof}

Let $\frJ(\ell)$ be the abelian group consisting of all fractional ideals of $F$ relatively prime to $\ell$.
It is proved by Weil \cite{Wei52} that the map 
\[
\frp\mapsto J_{(kt,t)}(\frp)
\] extends to a Gr\"o{\ss}encharakter $\frJ(\ell)\ra\bC^*$ 
with conductor dividing $\ell^2$ 
 by multiplicativity $J_{(kt,t)}(\fra\frb)=J_{(kt,t)}(\fra)J_{(kt,t)}(\frb)$ for $\fra, \frb\in \frJ(\ell)$.
Note that the conductor is determined by Hasse \cite{Has54}.
Now we define
\[
\psi_{(kt,t)}\colon \frJ(\ell)\ra\bC^*
\]
 to be the unitary Gr\"o{\ss}encharakter characterized by 
\[
\psi_{(kt,t)}(\frp)=\frac{J_{(kt,t)}(\frp)}{\sqrt{q_\frp}}
\]
for  each prime $\frp \nmid \ell$.
 Define $\psi_{(kt,t)}(\frl)=0$ if $\frl\mid \ell$.
Then one can associate $\psi_{(kt,t)}$ with the Hecke $L$-function 
\[
L(s, \psi_{(kt,t)})=\prod_{\frp}\left(1-\psi_{(kt,t)}(\frp)q_\frp^{-s}\right)^{-1}
=\prod_{\frp}\left(1-J_{(kt,t)}(\frp)q_\frp^{-s-1/2}\right)^{-1}, 
\]
which is absolutely convergent for $\Re(s)>1$. 
According to Hecke theory, it follows that $L(s, \psi_{(kt,t)})$ has an analytic continuation as an entire function on $\bC$ and a functional equation between $s$ and $1-s$.
We notice that $L(s, \psi_{(kt,t)})$ depends only on the $G$-orbit of $(kt,t)$ by (\ref{eqjacobi}).
Hence 
\[
L(s, \psi_{(kt,t)})=L(s,\psi_{(k,1)})
\]
for any $1\leq k \leq \ell-2$ and $t\in G$.

\begin{lemma}\label{lemMer}
For any $(kt,t)\in I_\ell$, there exists a constant $C$ such that 
\[
\sum_{q_\frp \leq x}\frac{J_{(kt,t)}(\frp)^2}{q_\frp^2}=C+o(1)\es\es\es (x\to \infty).
\]
\end{lemma}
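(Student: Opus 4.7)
The plan is to reduce the claim to a Mertens-style convergence theorem for Hecke $L$-functions. Since $\psi_{(kt,t)}(\frp) = J_{(kt,t)}(\frp)/\sqrt{q_\frp}$ at primes $\frp \nmid \ell$ (and both sides vanish above $\ell$), setting $\psi := \psi_{(kt,t)}^2$ gives $J_{(kt,t)}(\frp)^2/q_\frp^2 = \psi(\frp)/q_\frp$, so the sum under consideration is
\[
\sum_{q_\frp \leq x}\frac{\psi(\frp)}{q_\frp}.
\]
Thus it suffices to show that this partial sum tends to a finite limit.

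The first step is to verify that $\psi$ is a non-trivial unitary Gr\"ossencharacter of $F$. Triviality would force $J_{(kt,t)}(\frp)^2 = q_\frp \in \bR$ for every $\frp \nmid \ell$; but this directly contradicts Lemma~\ref{lemnon} at any prime $\frp$ of $F$ above a rational prime $p \equiv 1 \pmod \ell$, and such $p$ exist in abundance by Dirichlet's theorem on primes in arithmetic progressions. Given non-triviality, Hecke's theory yields that $L(s,\psi) = \prod_\frp (1-\psi(\frp)q_\frp^{-s})^{-1}$ extends to an entire function satisfying a functional equation between $s$ and $1-s$, and the classical non-vanishing of Hecke $L$-functions on the line $\Re(s)=1$ gives $L(1,\psi)\neq 0$.

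The last step is to pass from these analytic properties of $L(s,\psi)$ to convergence of the partial sum at $s=1$. A standard de la Vall\'ee Poussin-type zero-free region to the left of $\Re(s)=1$, together with the holomorphy of $L(s,\psi)$ and non-vanishing on $\Re(s)=1$, produces the prime number theorem
\[
\sum_{q_\frp \leq x}\psi(\frp) = O\!\left(\frac{x}{(\log x)^A}\right)\quad \mbox{for any}\ A>1,
\]
and Abel summation then yields the desired finite limit $C$. The main obstacle is exactly this Tauberian passage: the analyticity of $\log L(s,\psi)$ at $s=1$ alone only guarantees convergence of the Dirichlet series $\sum_\frp \psi(\frp)/q_\frp^s$ as $s\to 1^+$, which is strictly weaker than convergence of the partial sums at $s=1$; invoking the zero-free region is what upgrades the former to the latter.
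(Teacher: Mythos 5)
Your proof is correct, but it takes a genuinely different route from the paper's. Both arguments begin identically: rewrite the sum as $\sum_{q_\frp\leq x}\psi(\frp)/q_\frp$ with $\psi=\psi_{(kt,t)}^2$ and deduce non-triviality of $\psi$ from Lemma~\ref{lemnon} (your extra remark that primes $p\equiv 1\pmod\ell$ exist, so that Lemma~\ref{lemnon} actually applies somewhere, is a point the paper leaves implicit). The divergence comes afterwards. The paper exploits the special arithmetic feature that $\psi$ has \emph{finite order} (its values at primes lie in $\mu_\ell$), so by class field theory $\psi(\frp)=\rho([K/F,\frp])$ for a non-trivial one-dimensional Artin character $\rho$ of a finite abelian extension $K/F$, and the convergence is then a direct citation of the generalized Mertens theorem (\cite[Theorem 4]{Ros99}, \cite[Lemma 5.3]{KKK23}). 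You instead treat $\psi$ as a general non-trivial unitary Gr\"ossencharacter and run the full analytic argument: entirety and non-vanishing of $L(s,\psi)$ on $\Re(s)=1$, a de la Vall\'ee Poussin zero-free region giving $\sum_{q_\frp\leq x}\psi(\frp)=O(x/(\log x)^A)$, and Abel summation. You are right to flag that Abel summability of the Dirichlet series at $s=1$ is weaker than convergence of the partial sums and that the zero-free region is what bridges the gap; this Tauberian step is exactly what the cited Mertens-type theorem packages. Your approach is more general (it does not need $\psi$ to have finite order, so it would apply to infinite-order Hecke characters as well) at the cost of invoking heavier standard machinery; the paper's is shorter and reduces to an off-the-shelf result in the finite-order setting.
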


\begin{proof}
Let $\psi\colon \frI(\ell)\ra \bC^*$ be the unitary Gr\"o{\ss}encharakter given by $\psi=\psi_{(kt,t)}^2$, which is non-trivial by Lemma \ref{lemnon}.
Here $\psi$ is of finite order because $\psi(\frp)\in \mu_{\ell}$ for any prime $\frp\nmid \ell$ of $F$.
Hence it follows by global class field theory that there exists a non-trivial one-dimensional Artin representation $\rho\colon \Gal(K/F)\lra \bC^*$
 of a finite abelian extension $K/F$ such that 
\[
\psi(\frp)=\rho\left(\left[{K/F}, \frp\right]\right)
\]
for any prime $\frp \nmid \ell$, where $\left[{K/F}, {\frp}\right] \in \Gal(K/F)$ is the Artin symbol at $\frp$.
Thus we obtain 
\begin{equation}\label{eqpsi}
\sum_{q_\frp \leq x}\frac{J_{(kt,t)}(\frp)^2}{q_\frp^2}=\sum_{q_\frp\leq x}\frac{\psi(\frp)}{q_\frp}=\sum_{q_\frp\leq x}\frac{\rho([K/F, \frp])}{q_\frp}.
\end{equation}
Applying the generalized Mertens' theorem (see \cite[Theorem 4]{Ros99} and \cite[Lemma 5.3]{KKK23}) to (\ref{eqpsi}),  
we obtain the desired asymptotic.
\end{proof}

\subsection{$L$-functions of Fermat curves}

For a prime $\frp$ of $F$, let $I_\frp \subset D_\frp \subset \Gal(\bar F/F)$ be the inertia group and decomposition group at $\frp$, respectively. 
Write $\Frob_\frp \in \Gal(\bar \bF_\frp/\bF_\frp) \simeq D_\frp/I_\frp$ for the geometric Frobenius at $\frp$, that is, the inverse of the $q_\frp$-power Frobenius map.
For a proper smooth  $F$-scheme $X$ and a rational prime $\ell'$ with $\frp \nmid \ell'$, write  $H_{\ell'}^1(X)=H^1_\et(X_{\bar F}, \bQ_{\ell'})$ for the $\ell'$-adic \'etale cohomology group of $X$ and define the $\frp$-polynomial of $X$ by 
\[
P_\frp(T, X)=\det(1-\Frob_\frp T \mid H_{\ell'}^1(X)^{I_\frp})\in \bZ[T].
\] 
%Note that $P_\frp(X,T)\in \bZ[T]$; In particular, it is independent of the choice of $\ell'$.
If $X$ is a smooth projective curve over $F$, then the $\ell'$-adic rational Tate module $V_{\ell'}\left(\Jac(X)\right)$ of the Jacobian variety of $X$ is isomorphic to the Tate twist $H_{\ell'}^1(X)(1)=H_{\ell'}^1(X)\otimes_{\bQ_{\ell'}} \bQ_{\ell'}(1)$, where $\bQ_{\ell'}(1)$ is the one-dimensional $\bQ_{\ell'}$-vector space with $\Gal(\bar F/F)$-action via the $\ell'$-adic cyclotomic character. 
Thus by the Poincar\'e duality, we have 
\[
H_{\ell'}^1(X) \simeq V_{\ell'}\left(\Jac(X)\right)^{\vee},
%=\Hom_{\bQ_{\ell'}}(V_{\ell'}\left(\Jac(X)\right), \bQ_{\ell'}),
\]
where $(-)^{\vee}$ means the $\bQ_{\ell'}$-linear dual $\Hom_{\bQ_{\ell'}}(-, \bQ_{\ell'})$.
Here the normalized Hasse-Weil $L$-function of $X$ is given by the Euler product 
\[
L(s, X)=\prod_{\frp} P_\frp\left(q_\frp^{-s-1/2}, X\right)^{-1}, 
\]
which is absolutely convergent for $\Re(s)>1$.

As in \S 1, let $\cC$ be the Fermat curve over $\bQ$ of degree $\ell$ whose affine equation is given by $x_0^\ell+y_0^\ell=1$.
For each $1 \leq k \leq \ell-2$, let $\cC_k$ be the smooth projective curve over $\bQ$ with affine equation 
\[
v^{\ell}=u(u+1)^{\ell-k-1}. 
\]

\begin{proposition}[{\cite[Proposition 2.1]{FGL16}}]\label{propF}
The notation is as above. 
\begin{itemize}
\item[$({\rm i})$] The morphism $\pi_k \colon \cC \ra \cC_k$ defined by the assignment
\[
(x_0, y_0)\mapsto (u, v)=(x_0^\ell, x_0y_0^{\ell-k-1})
\]
has degree $\ell$.
\item[$({\rm ii})$] Let $\cA_k$ be the $\Gal(F/\bQ)$-stable subgroup of automorphisms of $\cC$ generated by $\gamma_k$, where $\gamma_k$ is defined by the assignment $(x_0,y_0)\mapsto (x_0\zeta_\ell^{k+1}, y_0\zeta_\ell)$ with $\zeta_\ell\in \mu_\ell$ a primitive $\ell$-th root of unity.
The curve $\cC_k$ is the quotient curve of $\cC$ by $\cA_k$ and its genus is $(\ell-1)/2$.
\item[$({\rm iii})$] $\Jac(\cC)$ is isogenous over $\bQ$ to the abelian variety $\prod_{k=1}^{\ell-2}\Jac(\cC_k)$.
\end{itemize}
\end{proposition}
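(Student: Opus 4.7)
The plan is to establish the three claims in sequence: (i) and (ii) by affine-coordinate calculations, and (iii) by diagonalizing the action of the $\cA_k$'s on differentials. Throughout, the key arithmetic fact is that every $j \in \{1, \dots, \ell-1\}$ is coprime to $\ell$, which applies in particular to $\ell - k - 1$ and $\ell - k$.

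For (i), I would first verify that $\pi_k$ is a well-defined morphism by substituting $(u,v) = (x_0^\ell,\, x_0 y_0^{\ell-k-1})$ into the defining equation of $\cC_k$ and using $y_0^\ell = 1 - x_0^\ell$. The degree is then computed by describing a generic fiber: over $(u,v)$ with $u \neq 0, -1$, the equation $x_0^\ell = u$ offers $\ell$ choices of $x_0$, and then $y_0^{\ell - k - 1} = v/x_0$ together with $\gcd(\ell - k - 1,\, \ell) = 1$ determines $y_0$ uniquely. Hence $\deg \pi_k = \ell$.

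For (ii), the identity $(x_0 \zeta_\ell^{k+1})^\ell + (y_0 \zeta_\ell)^\ell = x_0^\ell + y_0^\ell = 1$ shows $\gamma_k \in \Aut(\cC)$, and $\gamma_k^\ell = \id$ gives $|\cA_k| = \ell$. The group $\cA_k$ is $\Gal(F/\bQ)$-stable because $\sigma_t$ sends $\gamma_k$ to $\gamma_k^t \in \cA_k$. A direct substitution shows $\pi_k \circ \gamma_k = \pi_k$ (using $\zeta_\ell^{k+1} \cdot \zeta_\ell^{\ell-k-1} = 1$), so $\pi_k$ factors as $\cC \to \cC/\cA_k \to \cC_k$ with the second arrow of degree $1$, i.e., an isomorphism. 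The genus of $\cC_k$ follows from Riemann--Hurwitz applied to the cyclic degree-$\ell$ cover $\cC_k \to \bP^1$, $(u,v) \mapsto u$, which is totally ramified over $u \in \{0, -1, \infty\}$ (by coprimality of $\ell - k - 1$ and $\ell - k$ with $\ell$) and \'etale elsewhere; this yields $2g(\cC_k) - 2 = -2\ell + 3(\ell-1)$, so $g(\cC_k) = (\ell-1)/2$.

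For (iii), I would assemble the Albanese push-forwards into a $\bQ$-morphism
\[
\Phi = (\pi_{k, *})_{k=1}^{\ell-2} \colon \Jac(\cC) \lra \prod_{k=1}^{\ell-2} \Jac(\cC_k).
\]
Both sides have dimension $g = (\ell-1)(\ell-2)/2$, so it suffices to check that the induced cotangent map
\[
(\pi_k^*)_k \colon \bigoplus_{k=1}^{\ell-2} H^0(\cC_k, \Omega^1) \lra H^0(\cC, \Omega^1)
\]
is an isomorphism. I would use the classical basis $\omega_{a, b} = x_0^{a-1} y_0^{b - \ell}\, dx_0$ with $1 \leq a,\, b$ and $a + b \leq \ell - 1$ of $H^0(\cC, \Omega^1)$, which diagonalizes the $\cA_k$-action: a direct computation gives $\gamma_k^* \omega_{a, b} = \zeta_\ell^{(k+1)a + b} \omega_{a, b}$. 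Since $\cC_k = \cC/\cA_k$ by (ii), the image $\pi_k^* H^0(\cC_k, \Omega^1)$ is exactly $H^0(\cC, \Omega^1)^{\cA_k}$, spanned by those $\omega_{a, b}$ with $(k+1)a + b \equiv 0 \pmod \ell$. The crux is the combinatorial observation that for each $(a, b)$ in the index set, the congruence $(k+1)a + b \equiv 0 \pmod \ell$ has a unique solution $k \pmod \ell$, and the ranges $1 \leq a, b$ and $a + b \leq \ell - 1$ force $k \in \{1, \dots, \ell - 2\}$: indeed, $k \equiv 0$ would require $a + b \equiv 0 \pmod \ell$ and $k \equiv -1$ would require $b \equiv 0 \pmod \ell$, both impossible. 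Thus each $\omega_{a, b}$ belongs to exactly one $\cA_k$-invariant subspace, the subspaces $H^0(\cC, \Omega^1)^{\cA_k}$ are linearly independent, and their sum spans $H^0(\cC, \Omega^1)$; combined with the injectivity of each $\pi_k^*$, this makes $(\pi_k^*)_k$ an isomorphism and hence $\Phi$ an isogeny defined over $\bQ$. This combinatorial partition of the basis is the main obstacle.
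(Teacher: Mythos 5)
The paper offers no proof of this proposition: it is imported verbatim as \cite[Proposition 2.1]{FGL16}, so there is no internal argument to compare yours against. Taken on its own terms, your proof is correct and essentially complete, and it follows the standard route one would expect from the cited source: a Kummer-theoretic fibre count for (i), the factorization $\pi_k=(\cC\to\cC/\cA_k)\xrightarrow{\sim}\cC_k$ plus Riemann--Hurwitz for the totally ramified cyclic cover $\cC_k\to\bP^1_u$ for (ii), and for (iii) the identification of the cotangent map of $(\pi_{k,*})_k$ with $(\pi_k^*)_k$ on $\bigoplus_k H^0(\cC_k,\Omega^1)$, together with the eigenbasis $\omega_{a,b}=x_0^{a-1}y_0^{b-\ell}\,dx_0$ and the observation that each $(a,b)$ with $1\leq a,b$ and $a+b\leq\ell-1$ satisfies $(k+1)a+b\equiv 0\pmod\ell$ for exactly one $k$, necessarily in $\{1,\dots,\ell-2\}$ because $a+b\not\equiv 0$ and $b\not\equiv 0\pmod\ell$. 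That partition is indeed the crux, and you execute it correctly; the equality of dimensions $(\ell-2)\cdot\tfrac{\ell-1}{2}=g$ then closes the argument. Two small repairs. First, with the paper's affine model $x_0^\ell+y_0^\ell=1$ the substitution gives $v^\ell=u(1-u)^{\ell-k-1}$, not $u(u+1)^{\ell-k-1}$; this is a sign-convention mismatch inherited from \cite{FGL16} (which uses a different sign for the Fermat equation) and is repaired by $(u,v)\mapsto(-u,-v)$ since $\ell$ is odd, but the literal verification you propose in (i) would detect it, so state the convention you are using. Second, in (i) the phrase ``determines $y_0$ uniquely'' only bounds the fibre by $\ell$; you should add that for each of the $\ell$ choices of $x_0$, as $y_0$ runs over the $\ell$-th roots of $1-u$ the quantities $x_0y_0^{\ell-k-1}$ run over all $\ell$ of the $\ell$-th roots of $v^\ell$ (using $\gcd(\ell-k-1,\ell)=1$), so exactly one choice of $y_0$ maps to $(u,v)$ and the generic fibre has exactly $\ell$ points. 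With these cosmetic fixes the proof stands.
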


Now we shall recall the constructions and properties of  the $L$-functions of $\cC_F$ and $\cC_{k, F}$ (see \cite{Wei52}, \cite[\S 2]{FGL16}, and \cite[\S 7]{Del82} for more details).
For simplicity, denote by $V_\ell(\cC_{k,F})$ the $\ell$-adic Tate module of the Jacobian variety $\Jac(\cC_{k, F})$ of $\cC_{k,F}$.
Let $\frp \nmid \ell$ be a prime of $F$.
Since $\cC_{k,F}$ has good reduction outside $\ell$, the action of the arithmetic Frobenius $\Frob_\frp^{-1}$ on $V_\ell(\cC_{k, F})$ is well-defined. 
Then we obtain the decomposition 
\[
V_\ell(\cC_{k,F})=\bigoplus_{t\in G}V_{(kt,t)},
\]
where $V_{(kt,t)}$ is a one-dimensional $\bQ_\ell$-linear $\Gal(\bar F/F)$-representation on which $\Frob_\frp^{-1}$ acts as multiplication of $J_{(kt,t)}(\frp)$.
By Proposition \ref{propF} $({\rm iii})$, we have 
\[
H^1_\ell(\cC_F) \simeq \bigoplus_{k=1}^{\ell-2}H^1_\ell(\cC_{k,F}), \es\es H^1_\ell(\cC_{k,F})\simeq \bigoplus_{t\in G}(V_{(kt,t)})^{\vee}
\]
and hence the $\Frob_\frp$-eigenvalues on $H^1_\ell(\cC_F)$ and $H^1_\ell(\cC_{k,F})$ are given by $\{J_{(kt,t)}(\frp) \mid (kt,t)\in I_\ell\}$ and 
$\{J_{(kt, t)}(\frp)\mid t\in G\}$, respectively.
Thus the $\frp$-polynomials of $\cC_F$ and $\cC_{k, F}$ are 
\[
P_\frp(T, \cC_F)=\prod_{k=1}^{\ell-2}P_\frp(T, \cC_{k, F}), \es\es
P_\frp(T, \cC_{k, F})=\prod_{t\in G}\left(1-J_{(kt,t)}(\frp)T\right).
\]
On the other hand, for any bad prime $\frl \mid \ell$, we have $P_\frl(T, \cC_F)=P_\frl(T, \cC_{k,F})=1$ (see \cite[Proposition 3.8]{Ots11} for example).
Consequently, the normalized Hasse-Weil $L$-functions
 $L(s,\cC_F)$ and $L(s,\cC_{k,F})$ satisfy 
\begin{equation}\label{eqJacobiL}
L(s, \cC_F)=\prod_{k=1}^{\ell-2}L(s, \cC_{k,F}),\es\es L(s, \cC_{k, F})=\prod_{t\in G}L(s, \psi_{(kt,t)}).
\end{equation}
Therefore each has an analytic continuation to $\bC$ and a functional equation between $s$ and $1-s$.

\subsection{Proof of main results}
Recall that we write $g=(\ell-1)(\ell-2)/2$ (resp.\ $g'=(\ell-1)/2$) for the genus of $\cC$ (resp.\ $\cC_k$).
For any prime $\frp$ of $F$ and $1\leq k \leq \ell-2$, let us consider the diagonal matrix 
\[
M_{\cC_k}(\frp)=
\mathrm{diag}\left({\psi_{(kt,t)}(\frp)} ;\ 1\leq t \leq \ell-1\right)
%=\frac{1}{\sqrt{q_\frp^{}}}
%\begin{pmatrix}
%J_{(k,1)}(\frp) & &\\
% &\ddots& \\
% &&J_{(k(\ell-1), \ell-1)}(\frp)
%\end{pmatrix}
=
\begin{pmatrix}
\psi_{(k,1)}(\frp) & &\\
 &\ddots& \\
 &&\psi_{(k(\ell-1),\ \ell-1)}(\frp)
\end{pmatrix}
\]
of degree $2g'=\ell-1$, where $\psi_{(kt, t)}(\frp)=J_{(kt,t)}(\frp)/\sqrt{q_\frp}$.
Thus 
if $\frp \nmid \ell$, then $M_{\cC_k}(\frp)$ is unitary; if $\frp \mid \ell$, then $M_{\cC_k}(\frp)=0$.
 We also define $M_\cC(\frp)$ as the block diagonal matrix
\[
M_\cC(\frp)
=
\begin{pmatrix}
M_{\cC_1}(\frp) && \\
&\ddots& \\
&& M_{\cC_{\ell-2}}(\frp)
\end{pmatrix},
\]
which is of degree $2g=(\ell-1)(\ell-2)$.
Then for $M_\cC=\{M_\cC(\frp)\}_\frp$ and $M_{\cC_k}=\{M_{\cC_k}(\frp)\}_\frp$, we have 
\begin{equation}\label{eqmatrix}
L(s, M_\cC)=L\left(s, \cC_F\right), \es
L(s, M_{\cC_k})=L\left(s, \cC_{k,F}\right), \es 
L(s, M_\cC)=\prod_{k=1}^{\ell-2}L(s, M_{\cC_k}).
\end{equation}
We notice that for each integer $n\geq 1$ and a rational prime $p$, if primes $\frp, \frp'$ of $F$ satisfy $\frp \mid p$ and $\frp'\mid p$, then it follows by $(\ref{eqjacobi})$ that $\Tr(M_{\cC_k}(\frp)^n)=\Tr(M_{\cC_k}(\frp')^n)$ and  
$\Tr(M_\cC(\frp)^n)=\Tr(M_\cC(\frp')^n)$. 
As in \S 1, we define $a_p(\cC)$ and $a_p(\cC_k)$ by 
\[
a_p(\cC)=
p+1-\#\cC(\bF_p),
\es\es\es
a_p(\cC_k)=
p+1-\#\cC_k(\bF_p)
\]
for each rational prime $p$.

\begin{lemma}\label{lemtr}
Let $p$ be a rational prime and $1\leq k \leq \ell -2$.
\begin{itemize}
\item[{${\rm (i)}$}] If $p\equiv 1\pmod \ell$, then $\Tr(M_\cC(\frp))=a_p(\cC)/\sqrt{p}$ and $\Tr(M_{\cC_k}(\frp))=a_p(\cC_k)/\sqrt{p}$ for any $\frp \mid p$. 
In particular, $a_p(\cC)=\sum_{k=1}^{\ell-2}a_p({\cC_k})$.
\item[{${\rm (ii)}$}] If $p\not\equiv 1\pmod \ell$, then $a_p(\cC)=a_p(\cC_k)=0$.
\item[{${\rm (iii)}$}] If $p\equiv-1 \pmod \ell$, then $\Tr(M_\cC(\frp))=-2g$ and $\Tr(M_{\cC_k}(\frp))=-2g'$ for any $\frp \mid p$.
\end{itemize} 
\end{lemma}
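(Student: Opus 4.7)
The argument rests on two ingredients: the Grothendieck--Lefschetz trace formula, which for primes of good reduction identifies $a_p(\cC)$ and $a_p(\cC_k)$ with traces of the geometric Frobenius on $\ell$-adic cohomology, and the one-dimensional eigenspace decomposition $H^1_\ell(\cC_{k,F})\simeq\bigoplus_{t\in G}V_{(kt,t)}^\vee$ recalled in \S 3.2. On each summand $\Frob_\frp$ acts by $J_{(kt,t)}(\frp)$, giving for any $\frp\mid p$ with $p\neq\ell$
$$
\Tr\bigl(\Frob_\frp\mid H^1_\ell(\cC_{k,F})\bigr)=\sum_{t\in G}J_{(kt,t)}(\frp)=\sqrt{q_\frp}\,\Tr\bigl(M_{\cC_k}(\frp)\bigr),
$$
together with the analogous identity for $\cC$ in which $I_\ell$ replaces $\{(kt,t):t\in G\}$.

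For (i), the hypothesis $p\equiv 1\pmod\ell$ gives $f_p=1$, hence $q_\frp=p$ and $\Frob_\frp=\Frob_p$. Combining the Grothendieck--Lefschetz formula $a_p(\cC_k)=\Tr(\Frob_p\mid H^1_\ell(\cC_k))$ with the displayed identity and dividing by $\sqrt{p}$ yields $\Tr(M_{\cC_k}(\frp))=a_p(\cC_k)/\sqrt{p}$ and the matching formula for $\cC$; the additivity $a_p(\cC)=\sum_k a_p(\cC_k)$ then follows from the block-diagonal structure $M_\cC(\frp)=\bigoplus_k M_{\cC_k}(\frp)$. For (iii), $p\equiv-1\pmod\ell$ forces $f_p=2$, which is even, so Lemma \ref{lemGR} gives $J_{(kt,t)}(\frp)=-\sqrt{q_\frp}$ and hence $\psi_{(kt,t)}(\frp)=-1$ uniformly in the index and in $\frp\mid p$. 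The matrices $M_{\cC_k}(\frp)$ and $M_\cC(\frp)$ are therefore $-I$ of sizes $\ell-1$ and $(\ell-1)(\ell-2)$, with traces $-(\ell-1)=-2g'$ and $-(\ell-1)(\ell-2)=-2g$ as claimed.

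Part (ii) is the most substantial. For $p\neq\ell$ with $f_p\geq 2$, I plan to exploit that $\Gal(\bar\bQ/\bQ)$ permutes the summands $V_{(kt,t)}^\vee$ via the action of $\Gal(F/\bQ)\simeq G$ on the index set through the mod-$\ell$ cyclotomic character; the geometric Frobenius $\Frob_p$ therefore acts by multiplication by $p$ on that index set, whose orbits all have length $f_p\geq 2$. A cyclic permutation of length $\geq 2$ of one-dimensional lines has zero trace, so $\Tr(\Frob_p\mid H^1_\ell(\cC))=\Tr(\Frob_p\mid H^1_\ell(\cC_k))=0$ and thus $a_p(\cC)=a_p(\cC_k)=0$. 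For $p=\ell$, I would count points directly on the chosen integral models: over $\bF_\ell$ the projective Fermat equation $X_0^\ell+Y_0^\ell=Z_0^\ell$ collapses to $(X_0+Y_0-Z_0)^\ell=0$, set-theoretically a projective line with $\ell+1$ points; and for $\cC_k$ the projective closure $V^\ell=UW^k(U+W)^{\ell-k-1}$ of $v^\ell=u(u+1)^{\ell-k-1}$ admits, using $V^\ell=V$ on $\bF_\ell$, a bijection $[U:V:W]\leftrightarrow[U:W]$ with $\bP^1(\bF_\ell)$, again yielding $\ell+1$ points. The main technical step is the Galois-permutation argument in the case $p\neq\ell$: verifying that the $G$-action on the index set is indeed induced by the cyclotomic character on the characters of $\mu_\ell\times\mu_\ell$. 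Once that is pinned down, the remainder is a bookkeeping exercise combining \S 3.2 with Lemma \ref{lemGR}.
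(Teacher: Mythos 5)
Your parts (i) and (iii) follow the paper's proof exactly: Grothendieck--Lefschetz together with the Jacobi-sum eigenvalue decomposition for (i), and Lemma \ref{lemGR} with $f_p=2$ for (iii); both are correct. Where you genuinely diverge is part (ii). The paper's argument there is a one-line elementary observation: $p\not\equiv 1\pmod\ell$ means $\ell\nmid p-1$, so the $\ell$-th power map is a bijection on $\bF_p$ (indeed on $\bF_p^*$, and trivially at $0$); hence $\#\cC(\bF_p)$ equals the point count of the line $X+Y=Z$ and the points of $v^\ell=u(u+1)^{\ell-k-1}$ are parametrized by $u$, giving $p+1$ in both cases. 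This treats all $p\not\equiv 1\pmod\ell$ uniformly, including $p=\ell$ and $p\equiv-1$, with no cohomology at all. Your route instead passes through the induced-representation structure of $H^1_\ell(\cC_k)$ over $\bQ$: the nontrivial image of $\Frob_p$ in $\Gal(F/\bQ)\simeq G$ permutes the eigenlines $V_{(kt,t)}^\vee$ without fixed points, so the trace vanishes. This is a legitimate and more structural argument (it explains the vanishing representation-theoretically), and the equivariance you flag as the main technical step is essentially supplied by the relation $J_{(k_1t,k_2t)}(\frp)={}^{\sigma_t}J_{(k_1,k_2)}(\frp)$, i.e.\ (\ref{eqjacobi}); but note two costs. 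First, the zero-trace conclusion needs the $\ell-1$ conjugate characters $\psi_{(kt,t)}$, $t\in G$, to be pairwise distinct, so that $\Frob_p$ genuinely permutes one-dimensional lines rather than acting by a scalar on a larger isotypic block --- this is true but is an extra verification the paper never needs. Second, you must split off $p=\ell$ as a separate direct computation on the plane models (which, as with the paper's own treatment, quietly identifies point counts on a possibly singular plane model with those of the smooth curve). In short: your proposal is correct in outline, but for (ii) the paper's bijectivity-of-the-$\ell$-th-power-map argument is shorter, uniform in $p$, and avoids the representation-theoretic bookkeeping your version requires.
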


\begin{proof}
First, we assume that $p \equiv 1\pmod \ell$, so that $\bF_p\simeq \bF_\frp$ for $\frp\mid p$.
Since the $\Frob_\frp$-eigenvalues on $H^1_\ell(\cC_F)$ and $H^1_\ell(\cC_{k, F})$ are given by Jacobi sums,  the Grothendieck-Lefschetz  fixed point formula (see \cite{Del77}) implies
\[
\#\cC(\bF_p)=p+1-\sum_{(kt,t)\in I_\ell}J_{(kt,t)}(\frp), \es\es 
\#\cC_k(\bF_p)=p+1-\sum_{t\in G}J_{(kt,t)}(\frp).
\]
This proves (i).

We next suppose that $p\not\equiv 1\pmod \ell$, so that $p$ and $\ell-1$ are relatively prime. 
Then it is known (see \cite[\S\S 6.1]{Was97} for instance) that $\#\cC(\bF_p)$ is equal to the number of $\bF_p$-valued points of the projective curve $X+Y=Z$, which implies $\#\cC(\bF_p)=p+1$.
Since the $\ell$-power map $\bF_p^*\ra \bF_p^*$ is an isomorphism, the $\bF_p$-valued points of the  affine curve $v^{\ell}=u(u+1)^{\ell-k-1}$ are completely determined by the values of $u\in \bF_p$ and so $\#\cC_k(\bF_p)=p+1$. 
Thus we get (ii).

Finally, if $p\equiv -1\pmod \ell$, then $f_p=2$.
Hence we get (iii) by  Lemma \ref{lemGR}.
\end{proof}

Now, we are ready to prove the main results.

\begin{proof}[Proof of Theorem $\ref{thmmain1}$]
Put $m=\ord_{s=1/2}L(s, \cC_F)$. 
Considering the Taylor expansion of the logarithm of the  limit (\ref{eqDRH}) in Conjecture \ref{conjDRH} for $M_\cC$, we see that {\rm \bf DRH (A)} for $L(s,\cC_F)=L(s, M_\cC)$ is equivalent to the existence of a constant $L\neq 0$
such that 
\begin{equation}\label{eqm1}
m\log\log x+
\underbrace{\sum_{q_\frp\leq x}
\sum_{n=1}^\infty \frac{\Tr(M_\cC(\frp)^n)}{n\sqrt{q_\frp}^n}}
_{(*)}
=L+o(1)
\es\es\es(x\ra \infty),
\end{equation}
where $\frp$ runs through all primes of $F$ with $q_\frp\leq x$. 
We decompose the double sum $(*)$ as $(*)=\mathrm{I}(x)+\mathrm{II}(x)+\mathrm{III}(x)$, where 
\begin{align*}
\mathrm{I}(x)&=\sum_{q_\frp\leq x}\frac{\Tr(M_\cC(\frp))}{\sqrt{q_\frp}}, \\
\mathrm{II}(x)&=\sum_{q_\frp\leq x}\frac{\Tr(M_\cC(\frp)^2)}{2q_\frp}, \\ 
\mathrm{III}(x)&=\sum_{q_\frp\leq x}\sum_{n=3}^\infty \frac{\Tr(M_\cC(\frp)^n)}{n\sqrt{q_\frp}^n}.
\end{align*}
By $|\Tr(M_\cC(\frp)^n)| \leq 2g$, we have 
\begin{equation}\label{eqconv}
|\mathrm{III}(x)| < \sum_{q_\frp\leq x}\sum_{n=3}^\infty 
\frac{2g}{3\sqrt{q_\frp}^n}=
\sum_{q_\frp\leq x}\frac{2g}{q_\frp\sqrt{q_\frp}}\cdot \frac{\sqrt{q_\frp}}
{3\sqrt{q_\frp}-3}
<\sum_{q_\frp\leq x}\frac{2g}{q_\frp\sqrt{q_\frp}}<\infty
%2g\cdot \zeta_F\left(3/2\right) 
\end{equation}
and so $\mathrm{III}(x)$ is absolutely convergent as $x\to \infty$.
Put 
\begin{equation}\label{eqm2}
C_1=\lim_{x\to \infty}\mathrm{III}(x).
\end{equation}
Since we have 
\[
\Tr(M_\cC(\frp)^2)=\sum_{(kt, t)\in I_\ell}\frac{J_{(kt, t)}(\frp)^2}{q_\frp},
\]
 Lemma \ref{lemMer} implies that there exists a constant $C_2$ such that 
\begin{equation}\label{eqm3}
\mathrm{II}(x)=\frac{1}{2}\sum_{q_\frp\leq x}\sum_{(kt,t)\in I_\ell}\frac{J_{(kt,t)}(\frp)^2}{q_\frp^2}
=C_2+o(1)\es\es\es (x\to \infty).
\end{equation}
Now let us decompose $\mathrm{I}(x)$ in terms of the residue degree $f_p$ for $\frp\mid p$. 
For any positive divisor $f$ of $\ell-1$, define 
\renewcommand{\arraystretch}{0.4}
\[
\mathrm{I}_f(x)=
\sum_{
\begin{array}{c}
\SC f_p=f \\
\SC p^f\leq x
\end{array}
}
\sum_{\frp\mid p}
\frac{\Tr(M_\cC(\frp))}{\sqrt{q_\frp}}
=\sum_{
\begin{array}{c}
\SC f_p=f \\
\SC p^f\leq x
\end{array}
}\sum_{\frp\mid p}
\frac{\Tr(M_\cC(\frp))}{\sqrt{p}^{f}}, 
\]
where $p$ runs through all rational primes with $f_p=f$ and $p^f\leq x$.
Then we see that 
\[
\mathrm{I}(x)=\sum_{f\mid (\ell-1)}\mathrm{I}_f(x).
\]
If $f=1$, then we have by Lemma \ref{lemtr} (i) and (ii) that
\[
\mathrm{I}_1(x)=
\sum_{
\begin{array}{c}
\SC f_p=1 \\
\SC p\leq x
\end{array}
}
\sum_{\frp\mid p}
\frac{\Tr(M_\cC(\frp))}{\sqrt{p}}=
\sum_{
\begin{array}{c}
\SC f_p=1 \\
\SC p\leq x
\end{array}
}
\sum_{\frp\mid p}
\frac{a_p(\cC)}{p}
=(\ell-1)\sum_{p\leq x}\frac{a_p(\cC)}{p}.
\]
%\red{
%If $f_p=2$, then $p\equiv -1\pmod \ell$ and $p$ splits $(\ell-1)/2$ primes in $\cO_F$.
%By Lemma \ref{lemtr} (iii), we have 
%\[
%\mathrm{I}_2(x)
%=
%\sum_{
%\begin{array}{c}
%\SC f_p=2 \\
%\SC p^2\leq x
%\end{array}
%}
%\sum_{\frp\mid p}
%\frac{\Tr(M_{\cC}(\frp))}{\sqrt{p}^2}
%=
%\sum_{
%\begin{array}{c}
%\SC p\equiv -1\pmod \ell \\
%\SC p\leq \sqrt{x}
%\end{array}
%}
%\sum_{\frp\mid p}
%\frac{-2g}{p}
%=-g(\ell-1)\sum_{
%\begin{array}{c}
%\SC p\equiv -1\pmod \ell \\
%\SC p\leq \sqrt{x}
%\end{array}
%}
%\frac{1}{p}.
%\]
%By Dirichlet's theorem on arithmetic progressions, for some constant $d$, we obtain
%\[
%\sum_{
%\begin{array}{c}
%\SC p\equiv -1\pmod \ell \\
%\SC p\leq \sqrt{x}
%\end{array}
%}\frac{1}{p}
%=\frac{1}{\ell-1}\log\log \sqrt{x}+d+o(1)
%=\frac{1}{\ell-1}\log\log x-\frac{1}{\ell-1}\log 2+d+o(1)
%\]
%as $x\to \infty$.
%Hence, if we put $C_3=g\log 2-g(\ell-1)d$, then 
%\[
%\mathrm{I}_2(x)=-g\log\log x+C_3+o(1) \es\es (x\to \infty).
%\]
%}
We notice that $f_p=2$ is equivalent to $p\equiv -1\pmod \ell$, and that any $p$ with $f_p=2$ splits into a product of distinct $(\ell-1)/2$ primes of $F$.
 Hence it follows by Lemma \ref{lemtr} (iii)
and Dirichlet's theorem on arithmetic progressions that there exists a constant $d$
such that 
\renewcommand{\arraystretch}{0.4}
\begin{align*}
\mathrm{I}_2(x)=
\sum_{
\begin{array}{c}
\SC f_p=2 \\
\SC p^2\leq x
\end{array}
}
\sum_{\frp\mid p}
\frac{\Tr(M_{\cC}(\frp))}{\sqrt{p}^2}
&=
\sum_{
\begin{array}{c}
\SC p\equiv -1\pmod \ell \\
\SC p\leq \sqrt{x}
\end{array}
}
\sum_{\frp\mid p}
\frac{-2g}{p}\\
&=\sum_{
\begin{array}{c}
\SC p\equiv -1\pmod \ell \\
\SC p\leq \sqrt{x}
\end{array}
}
\frac{\ell-1}{2}\cdot\frac{-2g}{p}\\
&=-g(\ell-1)\left(\frac{1}{\ell-1}\log \log \sqrt{x}+d+o(1) \right)\\
&=-g\log \log x+g\log 2-g(\ell-1)d+o(1) & (x\to \infty).
\end{align*}
We put $C_3:=g\log 2-g(\ell-1)d$, so that 
\[
\mathrm{I}_2(x)=-g\log\log x+C_3+o(1)\es\es (x\to \infty).
\]
%If $f>2$, then $\mathrm{I}_f(x)$ converges as $x\to \infty$ because 
%\[
%|\mathrm{I}_f(x)| \leq \frac{\ell-1}{f}
%\sum_{
%\begin{array}{c}
%\SC f_p=f \\
%\SC p^f\leq x
%\end{array}
%}
%\frac{2g}{\sqrt{p}^f}
%< \frac{2(\ell-1)g}{f}\zeta_{\bQ}\left(\frac{f}{2}\right) <\infty.
%\]
If $f>2$, then it immediately follows that $\mathrm{I}_f(x)$ is absolutely convergent as $x\to \infty$.
Thus we may put 
\[
C_4=\lim_{x\to \infty}
\sum_{
\begin{array}{c}
\SC f\mid (\ell-1) \\
\SC f>2
\end{array}
}
\mathrm{I}_{f}(x).
\]
Hence we obtain 
\begin{equation}\label{eqm4}
\mathrm{I}(x)=(\ell-1)\sum_{p\leq x}\frac{a_p(\cC)}{p}-g\log\log x+C_3+C_4+o(1)
\es\es\es(x\to \infty).
\end{equation}
In consequence, combining (\ref{eqm1}), (\ref{eqm2}), (\ref{eqm3}), (\ref{eqm4}) and dividing by $\ell-1$, we obtain
\[
\sum_{p\leq x}\frac{a_p(\cC)}{p}=\frac{g-m}{\ell-1}\log\log x+\frac{1}{\ell-1}\left(L-\sum_{i=1}^4C_i\right)+o(1)\es\es\es(x\to \infty).
\]

If Theorem \ref{thmmain1} (i) holds, then we have Theorem \ref{thmmain1} (ii) by setting 
$c=\left(L-\sum_{i=1}^4C_i\right)/(\ell-1)$.
Conversely, if we have Theorem \ref{thmmain1}  (ii), then the asymptotic (\ref{eqm1}) holds for 
$L=(\ell-1)c+\sum_{i=1}^4C_i$, and hence we get Theorem \ref{thmmain1} (i).
\end{proof}

By a similar argument, we obtain the result on $\cC_k$ as follows.

\begin{proof}[Proof of Theorem $\ref{thmmain2}$]
Let $m_k=\ord_{s=1/2}L(s,M_{\cC_k})$.
Taking the logarithm of (\ref{eqDRH}) for $L(s,M_{\cC_k})$, we see that  {\bf DRH (A)} for $L(s, M_{\cC_k})$ is equivalent to the existence of a constant $L'\neq 0$ such that  
\begin{equation}\label{eqM1}
m_k\log\log x+
\sum_{q_\frp\leq x}\frac{\Tr(M_{\cC_k}(\frp))}{\sqrt{q_\frp}}+
\sum_{q_\frp\leq x}\frac{\Tr(M_{\cC_k}(\frp)^2)}{2q_\frp}+
\sum_{q_\frp\leq x}\sum_{n=3}^\infty\frac{\Tr(M_{\cC_k}(\frp)^n)}{n\sqrt{q_\frp}^n}
=L'+o(1)
\end{equation}
as $x\to \infty$. 
By Lemma \ref{lemMer} and the similar calculation as in (\ref{eqconv}), we see that 
\[
\sum_{q_\frp\leq x}\frac{\Tr(M_{\cC_k}(\frp)^2)}{2q_\frp}+
\sum_{q_\frp\leq x}\sum_{n=3}^\infty\frac{\Tr(M_{\cC_k}(\frp)^n)}{n\sqrt{q_\frp}^n}=C_1'+o(1)\es\es (x\to \infty)
\]
for some constant $C_1'$. 
Now let us decompose as 
\renewcommand{\arraystretch}{0.4}
\[
\sum_{q_\frp\leq x}\frac{\Tr(M_{\cC_k}(\frp))}{\sqrt{q_\frp}}=\sum_{f\mid (\ell-1)}\mathrm{I}_{k, f}(x),\ \mbox{where}\ 
\mathrm{I}_{k, f}(x)=
\sum_{
\begin{array}{c}
\SC f_p=f \\
\SC p^f\leq x
\end{array}
}
\sum_{\frp\mid p}
\frac{\Tr(M_{\cC_k}(\frp))}{\sqrt{p}^f}.
\]
Since $|\mathrm{I}_{k,f}(x)|$ is bounded if $f>2$, we may put 
\[
C'_2=\lim_{x\to \infty}
\sum_{
\begin{array}{c}
\SC f\mid (\ell-1) \\
\SC f>2
\end{array}
}
\mathrm{I}_{k,f}(x).
\]
%\red{As in the proof of Theorem $\ref{thmmain1}$, we see by 
%Dirichlet's theorem on arithmetic progressions and Lemma \ref{lemtr} (iii) that 
%}
As a consequence of Dirichlet's theorem on arithmetic progressions and Lemma \ref{lemtr} (iii), it follows that 
\begin{align*}
\mathrm{I}_{k,2}(x)=
\sum_{
\begin{array}{c}
\SC f_p=2 \\
\SC p^2\leq x
\end{array}
}
\sum_{\frp\mid p}
\frac{-2g'}{p}
=
\sum_{
\begin{array}{c}
\SC p\equiv -1\pmod\ell \\
\SC p\leq \sqrt{x}
\end{array}
}
\frac{\ell-1}{2}\cdot 
\frac{-2g'}{p}
=-g'\log\log x+C'_3+o(1)
\end{align*} 
%\renewcommand{\arraystretch}{0.4}
%\begin{align*}
%\mathrm{I}_2(x)=
%\sum_{
%\begin{array}{c}
%\SC f_p=2 \\
%\SC p^2\leq x
%\end{array}
%}
%\sum_{\frp\mid p}
%\frac{\Tr(M_{\cC_k}(\frp))}{\sqrt{q_\frp}}
%&=
%\sum_{
%\begin{array}{c}
%\SC p\equiv -1\pmod \ell \\
%\SC p\leq \sqrt{x}
%\end{array}
%}
%\sum_{\frp\mid p}
%\frac{-2g'}{p}\\
%&=-g'(\ell-1)\sum_{
%\begin{array}{c}
%\SC p\equiv -1\pmod \ell \\
%\SC p\leq \sqrt{x}
%\end{array}
%}
%\frac{1}{p}\\
%&=-g'\log\log x+C'_3+o(1)\es\es\es (x\to \infty)
%\end{align*}
as $x\to \infty$ for some constant $C_3'$. 
Since we have 
\[
\mathrm{I}_{k,1}(x)=(\ell-1)\sum_{p\leq x}\frac{a_p(\cC_k)}{p}
\]
by Lemma \ref{lemtr} (i) and (ii), we see that (\ref{eqM1}) is equivalent to 
\[
\sum_{p\leq x}\frac{a_p(\cC_k)}{p}=\frac{g'-m_k}{\ell-1}\log\log x+\frac{1}{\ell-1}(L'-C_1'-C_2'-C_3')+o(1)\es\es\es (x\to \infty), 
\]
which proves Theorem \ref{thmmain2}.
\end{proof}

\subsection{Second moment $L$-functions}

As an application of  Theorems \ref{thmmain1} and \ref{thmmain2}, we compute the order of zero at $s=1$ for the second moment $L$-functions of the curves $\cC$ and $\cC_k$.

For each $1\leq k \leq \ell-2$ and rational prime $p\neq \ell$, if $\alpha_1(p),\ldots, \alpha_{\ell-1}(p)$ are the $\Frob_p$-eigenvalues on $H^1_\ell(\cC_k)$, then
we set 
\[
M_k(p)=\frac{1}{\sqrt p}
\begin{pmatrix}
\alpha_1(p) && \\
&\ddots& \\
&& \alpha_{\ell-1}(p)
\end{pmatrix}
\]
and define $M_k(\ell)=0$.
We also define 
\[
M(p)
=
\begin{pmatrix}
M_1(p) && \\
&\ddots& \\
&& M_{\ell-2}(p)
\end{pmatrix}
\]
as a block diagonal matrix.
Then for $M=\{M(p)\}_p$ and $M_k=\{M_k(p)\}_p$, the normalized Hasse-Weil $L$-functions $L(s,\cC)$ and $L(s,\cC_k)$ coincide with  
$L(s,M)$
and 
$L(s,M_k)$, respectively.
Thus we may define the second moment $L$-functions of them by 
\[
L(s,\cC)^{\SSC (2)}=L(s,M^2), \es\es L(s,\cC_k)^{\SSC (2)}=L(s,M_k^2).
\]
Since $L(s,\cC)=\prod_{k=1}^{\ell-2}L(s,\cC_k)$, we see by \cite[Lemma 2.10]{FGL16} that 
\begin{equation}\label{eql1}
L(s,\cC_F)=L(s,\cC)^{\ell-1},\es\es L(s,\cC_{k, F})=L(s,\cC_k)^{\ell-1}.
\end{equation}
Then we have the following.

\begin{corollary}\label{cor1}
Assume that {\bf DRH (A)} holds for both $L(s,\cC)$ and $L(s,\cC_F)$.
Then one has 
\[
\ord_{s=1}L(s,\cC)^{\SSC (2)}=\ell-2.
\]
\end{corollary}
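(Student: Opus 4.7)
The plan is to obtain two different asymptotic formulas for $\sum_{p \le x} a_p(\cC)/p$ --- one from DRH (A) for $L(s,\cC_F)$ via Theorem \ref{thmmain1}, and one from DRH (A) for $L(s,\cC)$ via direct application of Conjecture \ref{conjDRH} --- and to compare their coefficients of $\log\log x$ to read off $\delta(\cC) = -\ord_{s=1}L(s,\cC)^{\SSC (2)}$.

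By (\ref{eql1}), $m = (\ell-1)m_0$, so DRH (A) for $L(s,\cC_F)$ combined with Theorem \ref{thmmain1} gives
\[
\sum_{p \le x}\frac{a_p(\cC)}{p} = \left(\frac{\ell-2}{2} - m_0\right)\log\log x + c + o(1).
\]
For a second asymptotic, I apply Conjecture \ref{conjDRH} (A) directly to $L(s,\cC) = L(s,M)$, with $M = \{M(p)\}_p$ the block diagonal matrix sequence over $\bQ$ defined just above. Taking the logarithm of (\ref{eqDRH}) and performing the Taylor expansion $-\log\det(1-A) = \sum_{n\ge 1}\Tr(A^n)/n$ exactly as in the proof of Theorem \ref{thmmain1}, DRH (A) for $L(s,\cC)$ becomes equivalent to the convergence of
\[
m_0\log\log x + \sum_{p \le x}\frac{a_p(\cC)}{p} + \frac{1}{2}\sum_{p \le x}\frac{\Tr(M(p)^2)}{p} + O(1).
\]
Applying a generalized Mertens theorem to $L(s,\cC)^{\SSC (2)} = L(s,M^2)$ yields $\sum_{p\le x}\Tr(M(p)^2)/p = \delta(\cC)\log\log x + O(1)$, which on substitution produces the second asymptotic
\[
\sum_{p \le x}\frac{a_p(\cC)}{p} = -\left(m_0 + \frac{\delta(\cC)}{2}\right)\log\log x + c' + o(1).
\]
Equating the coefficients of $\log\log x$ in the two asymptotics gives $-(m_0 + \delta(\cC)/2) = (\ell-2)/2 - m_0$, hence $\delta(\cC) = -(\ell - 2)$ and $\ord_{s=1}L(s,\cC)^{\SSC (2)} = \ell - 2$.

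The principal obstacle is the generalized Mertens step for $L(s,\cC)^{\SSC (2)}$. Unlike Lemma \ref{lemMer}, which concerns finite-order Gr\"ossencharacters directly, here one must secure analytic control --- meromorphic continuation and non-vanishing on $\Re(s) = 1$ away from $s=1$ --- over $L(s,\cC)^{\SSC (2)}$ itself. This can be arranged by decomposing $L(s,\cC_F)^{\SSC (2)} = \prod_{(kt,t) \in I_\ell} L(s, \psi_{(kt,t)}^2)$ into Hecke $L$-functions of finite-order Gr\"ossencharacters (cf.\ the proof of Lemma \ref{lemMer}) and transferring the resulting analytic information down to $L(s,\cC)^{\SSC (2)}$ via Artin formalism. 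As a cross-check, the coefficient $-(\ell-2)$ is also obtainable unconditionally by imitating the residue-degree decomposition in the proof of Theorem \ref{thmmain1}: the $f_p = 1$ piece is $O(1)$ by Lemma \ref{lemMer}, the $f_p = 2$ piece equals $-(\ell-2)\log\log x + O(1)$ by Lemma \ref{lemGR} together with Dirichlet--Mertens, and $f_p \ge 3$ contributes nothing because the $\ell$-th power map on $\bF_{p^2}^*$ is bijective.
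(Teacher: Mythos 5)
Your proposal is correct and follows essentially the same route as the paper: one asymptotic for $\sum_{p\le x}a_p(\cC)/p$ from Theorem \ref{thmmain1} together with $m=(\ell-1)m_0$, a second with coefficient $-\bigl(m_0+\delta(\cC)/2\bigr)$ from {\bf DRH (A)} for $L(s,\cC)$, and a comparison of the $\log\log x$ coefficients. The only difference is cosmetic: the paper imports the second asymptotic as ``the same computation as the proof of \cite[Theorem 3.2]{KK23}'', whereas you unpack that computation (Taylor expansion plus a generalized Mertens theorem for $L(s,\cC)^{\SSC(2)}$) and add a correct unconditional cross-check of the value $-(\ell-2)$ via the residue-degree decomposition.
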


\begin{proof}
Recall that  we have put $g=(\ell-1)(\ell-2)/2$ and $m=\ord_{s=1/2}L(s,\cC_F)$.
Set $m_0=\ord_{s=1/2}L(s,\cC)$.
Since  $m=(\ell-1)m_0$ by (\ref{eql1}), 
Theorem \ref{thmmain1} implies that 
\begin{align}\label{eqA}
\sum_{p\leq x} \frac{a_p(\cC)}{p}
&=\left(\frac{\ell-2}{2}-m_0\right)\log\log x+O(1)\es\es\es (x\to \infty).
\end{align}
On the other hand, let us denote $\delta(\cC)=-\ord_{s=1}L(s,\cC)^{\SSC (2)}$.
Since $\Tr(M(p))=a_p(\cC)/\sqrt{p}$ for each rational prime $p$, applying the same computation as in the proof of \cite[Theorem 3.2]{KK23} to $L(s,\cC)=L(s,M)$, we also have
\begin{align}\label{eqB}
\sum_{p\leq x}\frac{a_p(\cC)}{p}=
-\left(\frac{\delta(\cC)}{2}+m_0\right)\log\log x+O(1)\es\es\es (x\to \infty)
\end{align}
under {\bf DRH (A)} for $L(s,\cC)$.
Comparing (\ref{eqA}) with (\ref{eqB}), we get the conclusion.
\end{proof}

By the same argument as above, we also get the following.

\begin{corollary}\label{cor2}
Let $1\leq k\leq \ell-2$ be an integer.
Assume that {\bf DRH (A)} holds for both $L(s,\cC_k)$ and $L(s,\cC_{k,F})$.
Then one has
\[
\ord_{s=1}L(s,\cC_k)^{\SSC (2)}=1.
\] 
\end{corollary}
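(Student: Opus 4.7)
The plan is to mirror the proof of Corollary \ref{cor1} with only numerical changes, deriving two $\log\log x$-asymptotic expressions for the partial sum $\sum_{p\le x}a_p(\cC_k)/p$ and reading off $\ord_{s=1}L(s,\cC_k)^{\SSC (2)}$ by matching the leading coefficients.

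First, I would set $m_{k,0}=\ord_{s=1/2}L(s,\cC_k)$. Using the factorization $L(s,\cC_{k,F})=L(s,\cC_k)^{\ell-1}$ from (\ref{eql1}), one has $m_k=(\ell-1)m_{k,0}$. Substituting this together with $g'=(\ell-1)/2$ into the formula of Theorem \ref{thmmain2} (ii) reduces the conclusion, under {\bf DRH (A)} for $L(s,\cC_{k,F})$, to
\[
\sum_{p\le x}\frac{a_p(\cC_k)}{p}=\left(\frac{1}{2}-m_{k,0}\right)\log\log x+O(1)\qquad(x\to\infty).
\]

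Next, since $\Tr(M_k(p))=a_p(\cC_k)/\sqrt{p}$ by the Grothendieck--Lefschetz trace formula applied to the $\ell$-adic cohomology of $\cC_k$, I would apply the argument used in the proof of \cite[Theorem 3.2]{KK23} directly to the sequence $M_k$. That argument expands the logarithm of the partial Euler product of $L(s,\cC_k)=L(s,M_k)$ at $s=1/2$ as a sum over $n\ge 1$: the $n=1$ layer supplies the term $\sum_{p\le x}a_p(\cC_k)/p$, the $n=2$ layer is controlled via Mertens' theorem applied to the second moment $L$-function and produces a $-(\delta(\cC_k)/2)\log\log x$ contribution with $\delta(\cC_k)=-\ord_{s=1}L(s,\cC_k)^{\SSC (2)}$, while the $n\ge 3$ tail converges absolutely thanks to the crude bound $|\Tr(M_k(p)^n)|\le 2g'=\ell-1$ (exactly as in (\ref{eqconv})). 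Under {\bf DRH (A)} for $L(s,\cC_k)$, this yields
\[
\sum_{p\le x}\frac{a_p(\cC_k)}{p}=-\left(\frac{\delta(\cC_k)}{2}+m_{k,0}\right)\log\log x+O(1)\qquad(x\to\infty).
\]

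Finally, matching the coefficients of $\log\log x$ in the two asymptotics gives $1/2=-\delta(\cC_k)/2$, hence $\delta(\cC_k)=-1$, and therefore $\ord_{s=1}L(s,\cC_k)^{\SSC (2)}=1$. Since every ingredient (Theorem \ref{thmmain2}, the identity (\ref{eql1}), and the KK23-style Mertens-type computation already invoked for $\cC$ in Corollary \ref{cor1}) is in place, no essential obstacle is expected; the only mildly delicate point is verifying that the $n=2$ layer genuinely contributes $-(\delta(\cC_k)/2)\log\log x$, but this is precisely the same Mertens-type application used for $\cC$, now specialized to the smaller matrix sequence $M_k$.
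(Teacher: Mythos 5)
Your proposal is correct and follows essentially the same route as the paper: Theorem \ref{thmmain2} combined with $m_k=(\ell-1)m_{k,0}$ and $g'=(\ell-1)/2$ gives the first asymptotic, the argument of \cite[Theorem 3.2]{KK23} applied to $L(s,\cC_k)=L(s,M_k)$ gives the second, and comparing the coefficients of $\log\log x$ yields $\delta(\cC_k)=-1$, i.e.\ $\ord_{s=1}L(s,\cC_k)^{\SSC(2)}=1$. The coefficient matching and the bound $|\Tr(M_k(p)^n)|\le \ell-1$ for the $n\ge 3$ tail are exactly as in the paper's treatment of $\cC$ in Corollary \ref{cor1}.
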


\begin{proof}
It follows by Theorem \ref{thmmain2} that 
\[
\sum_{p\leq x}\frac{a_p(\cC_k)}{p}=\left(\frac{1}{2}-m_{k, 0}\right)\log\log x+O(1)\es\es\es (x\to \infty)
\]
for $m_{k, 0}=\ord_{s=1/2}L(s, \cC_k)$ because $g'=(\ell-1)/2$ and $m_k=(\ell-1)m_{k, 0}$.
On the other hand, we have 
\[
\sum_{p\leq x}\frac{a_p(\cC_k)}{p}=-\left(\frac{\delta(\cC_k)}{2}+m_{k,0}\right)\log\log x+O(1)\es\es\es (x\to \infty)
\] 
for $\delta(\cC_k)=-\ord_{s=1}L(s, \cC_k)^{\SSC (2)}$. 
Hence we obtain the corollary.
\end{proof}

%%%%%%%%%%%%%%%%%%%%%%%%%%%%%%%%%%%%%%%%%%%%%%%%%%%
%
%   謝辞
%
%
\section*{Acknowledgments}
The author would like to thank Professor Shin-ya Koyama for his helpful comments and advice on this work.
The author also would like to thank the anonymous referee for sparing the time to write so many detailed and useful comments.

%%%%%%%%%%%%%%%%%%%%%%%%%%%%%%%%%%%%%%%%%%%%%%%%%%%%%%%%%%%%%%%%
% Reference
%
%%%%%%%%%%%%%%%%%%%%%%%%%%%%%%%%%%%%%%%%%%%%%%%%%%%%%%%%%%%

%%%%%%%%%%%%%%%%%%%%%%%%%%%%%%%%%%%%%%%%%%%%%%%%%%%%%%%%%%%%%%%%%%%%%%%%%%%%%%%%%%%%%%%%%%%%%%%%%%%%%%%%%%%%%%%
\vspace{50pt}

Department of Architecture, Faculty of Science and Engineering,
Toyo University

2100, Kujirai, Kawagoe, 
Saitama 350-8585, Japan

{\it  E-mail address} : \email{\tt okumura165@toyo.jp}

{\it URL} : {\tt \url{https://sites.google.com/view/y-okumura/index-e?authuser=0}}
%%%%%%%%%%%%%%%%%%%%%%%%%%%%%%%%%%%%%%%%%%%%%%%%%%%%%%%%%%%%%%%%%%%%%%%%%%%%%%%%%%%%%%%%%%%%%%%%%%%%%%%%%%%%%%%%%%%%%%%%%%%%%%%%%

\end{document}